\documentclass[12pt]{amsart}

\usepackage{amsmath}
\usepackage{amssymb}
\usepackage{amsthm}
\usepackage{amscd}
\usepackage{xypic}
\usepackage{delarray}

\textwidth=14.00cm \textheight=22cm \topmargin=0.00cm
\headsep=1cm \numberwithin{equation}{section}
\hyphenation{semi-stable} \emergencystretch=10pt

%% theoremlike environments

\newtheorem{theorem}{Theorem}[section]
\newtheorem{lemma}[theorem]{Lemma}

\newtheorem{corollary}[theorem]{Corollary}

\theoremstyle{definition}
\newtheorem{definition}[theorem]{Definition}
\newtheorem{remark}[theorem]{Remark}

\newtheorem{convention and reminder}[theorem]{Convention and Reminder}
\newtheorem{convention and remark}[theorem]{Convention and Remark}
\newtheorem{definition and remark}[theorem]{Definition and Remark}

\newtheorem{reminders and definition}[theorem]{Reminders and Definition}

\newtheorem{notation and remarks}[theorem]{Notation and Remarks}
\newtheorem{notation and remark}[theorem]{Notation and Remark}
\newtheorem{example}[theorem]{Example}

\newcommand\Ker{\operatorname{\Ker}}

%=============================== begin document ==========================

%=============================== begin document ==========================

\begin{document}

\title[COMPLETE LINEAR SERIES ON A HYPERELLIPTIC CURVE]{COMPLETE LINEAR SERIES\\ ON A HYPERELLIPTIC CURVE}

\author{EUISUNG PARK}

\date{SEOUL, August 2008}

\subjclass[2]{:14H99, 13D02, 14N05}

\keywords{hyperelliptic curve, linear series, minimal free resolution}

\thanks{This work was supported by the Korea Research Foundation Grant funded by the Korean
Government (KRF-C00013).}

\begin{abstract}
In this paper we study complete linear series on a hyperelliptic curve $C$ of arithmetic genus $g$. Let $A$ be the unique line bundle on $C$ such that $|A|$ is a $g^1 _2$, and let $\mathcal{L}$ be a line bundle on $C$ of degree $d$. Then $\mathcal{L}$ can be factorized as $\mathcal{L} = A^m \otimes B$ where $m$ is the largest integer satisfying $H^0 (C,\mathcal{L} \otimes A^{-m}) \neq 0$. Let $b = \mbox{deg}(B)$. We say that \textit{the factorization type of} $\mathcal{L}$ is $(m,b)$. Our main results in this paper assert that $(m,b)$ gives a precise answer for many natural questions about $\mathcal{L}$.

We first show that $(m,b)$ precisely determines the dimension of the vector spaces $H^0 (C,\mathcal{L})$ and $H^1 (C,\mathcal{L})$, and the base point freeness and the very ampleness of $\mathcal{L}$. For example, $\mathcal{L}$ is very ample if and only if $b=0$ and $m \geq g+1$ or $1 \leq b \leq g+1$ and $m+b \geq g+2$.

When $\mathcal{L}$ is very ample, we study the Hartshorne-Rao module and the minimal free resolution of the linearly normal curve embedded by $|\mathcal{L}|$. For $d=2g+1+p$, $p \geq 0$, we obtain all the graded Betti numbers explicitly. In this case, property $N_p$ holds while property $N_{p+1}$ fails to hold. We show that a finite subscheme of $C$ determined by the factorization of $\mathcal{L}$ causes the failure of $N_{p+1}$.

For $d \leq 2g$, we discuss at length the Hartshorne-Rao module and the minimal free resolution. It turns out that they are precisely determined by $(m,b)$. In particular, it is shown that the two line bundles have the same factorization type if and only if the Betti diagrams of the corresponding linearly normal curves are equal to each other. This enables us to understand how many distinct Betti diagrams occur at all.
\end{abstract}

\maketitle
\tableofcontents
\thispagestyle{empty}

\section{Introduction}
\label{1. Introduction}

\noindent In the theory of projective curves, one of the most important problem is to understand complete linear series on a curve and the maps to projective spaces defined by them. Concerning this problem, one can have a very detailed picture if either the curve in question has a very small genus, or the degree of the line bundle in question is large enough (\cite{C}, \cite{E}, \cite{F}, \cite{Green}, \cite{GL1}, \cite{GL2}, \cite{Laz}, \cite{Mum}, etc). On the other hand, it is impossible to say much about arbitrary cases. The main goal of this paper is to study the problem for line bundles on a hyperelliptic curve from various natural viewpoints.
\smallskip

Let $C$ be a hyperelliptic curve of arithmetic genus $g$, i.e. a projective integral (possibly singular) curve  such that $h^1 (C,\mathcal{O}_C) \geq 2$ and there is a degree two map $f : C \rightarrow {\mathbb P}^1$. It is well-known that $f$ is uniquely determined, up to automorphisms of ${\mathbb P}^1$. Therefore $A:= f^* \mathcal{O}_{{\mathbb P}^1} (1)$ is the unique line bundle on $C$ defining a $g^1 _2$. Also $\omega_C = A^{g-1}$ and hence $C$ is locally Gorenstein (c.f. Lemma \ref{lem:uniqueness}). One can define a natural factorization of line bundles on $C$ with respect to $A$. Let $\mathcal{L}$ be a line bundle on $C$, and consider the integer $m_{\mathcal{L}}$ defined by
\begin{equation*}
m_{\mathcal{L}} := \mbox{max}~\{ t \in {\mathbb Z} ~|~ H^0 (C,\mathcal{L} \otimes A^{-t}) \neq 0 \}
\end{equation*}
Then $\mathcal{L}$ can be factorized as $\mathcal{L}= A^{m_\mathcal{L}} \otimes B_{\mathcal{L}}$ for $B_{\mathcal{L}}:=\mathcal{L} \otimes A^{-m_\mathcal{L}}$. Let $b_{\mathcal{L}} = \mbox{deg} (B_{\mathcal{L}})$. The pair $(m_\mathcal{L} , b_\mathcal{L})$ will be called \textit{the factorization type of} $\mathcal{L}$.

Throughout this paper, we are intended to investigate the relation between the algebraic and geometric properties of $\mathcal{L}$ and the factorization type of $\mathcal{L}$. It will turn out that the pair $(m_\mathcal{L} , b_\mathcal{L})$ gives a precise answer for many natural questions about $(C,\mathcal{L})$.

In Section 2 we study a geometric meaning of the above factorization. Let $S$ be the smooth rational ruled surface ${\mathbb P} (\mathcal{O}_{{\mathbb P}^1} \oplus \mathcal{O}_{{\mathbb P}^1} (b_{\mathcal{L}}-g-1))$, and let $C_0$ and $\mathfrak{f}$ denote respectively the minimal section of $S$ and a fiber of the projection morphism $S \rightarrow {\mathbb P}^1$. Theorem~\ref{thm:surface} shows that $C$ is embedded in $S$ as a divisor linearly equivalent to $2C_0 + (2g+2-b_{\mathcal{L}})\mathfrak{f}$. Moreover, $A = \mathcal{O}_S (\mathfrak{f}) \otimes \mathcal{O}_C$ and $B_{\mathcal{L}} = \mathcal{O}_S (C_0) \otimes \mathcal{O}_C$. Therefore we can regard $\mathcal{L}$ as the restriction of the line bundle $\mathcal{O}_S (C_0 + m_{\mathcal{L}} \mathfrak{f})$ on $S$ to $C$. Since the complete linear series on $S$ are very well understood, this observation enables us to study the line bundle $\mathcal{L}$ from several viewpoints.

In Section 3 we solve the problem of the Riemann-Roch, i.e. the computation of the dimension of the $K$-vector spaces $H^0 (C,\mathcal{L})$ and $H^1 (C,\mathcal{L})$, and of the base point freeness and the very ampleness of $\mathcal{L}$. Theorem~\ref{thm:main1} shows how these properties of $\mathcal{L}$ are precisely determined by the factorization type $(m_\mathcal{L} , b_\mathcal{L})$. Also this result gives a complete list of base point free ample line bundles and very ample line bundles on $C$. Then in Theorem~\ref{thm:main2}, we provide a rounded picture of the map $\varphi_{\mathcal{L}} : C \rightarrow {\mathbb P} H^0 (C,\mathcal{L})^*$ when $\mathcal{L}$ is base point free but not very ample. As a byproduct, we obtain the classification of birationally very ample line bundles on $C$.

Through Section 4 $\sim$ Section 6, we study the linearly normal embedding of $C$. Let $\mathcal{L}$ be a very ample line bundle on $C$ of degree $d$ and with the factorization type $(m,b)$, and consider the linearly normal curve
\begin{equation*}
C \subset {\mathbb P} H^0 (C,\mathcal{L})^* ={\mathbb P}^r , ~ r=d-g,
\end{equation*}
embedded by $|\mathcal{L}|$. A natural approach to study $C \subset {\mathbb P}^r$ is to investigate the Hartshorne-Rao module, the Castelnuovo-Mumford regularity, and the graded Betti numbers of the minimal free resolution.

In Section 4  we consider the case where $d=2g+1+p$ for some $p \geq 0$. Then $C \subset {\mathbb P}^r$ is projectively normal and so $3$-regular. In Theorem~\ref{thm:main3}, we obtain all the graded Betti numbers of $C$. They depend only on $d$, and so $(m,b)$  has no effect on the form of the minimal free resolution. As a corollary, $(C,\mathcal{L})$ satisfies Green-Lazarsfeld's property $N_p$ while it fails to satisfy property $N_{p+1}$, which was first proved in \cite{GL2} when $C$ is smooth. Recall that a line bundle $L$ of degree $d=2g+1+p$ on a smooth projective curve $X$ of genus $g$ fails to satisfy property $N_{p+1}$ if and only if either $H^0 (X,L \otimes \omega_X ^{-1}) \neq 0$ and hence $X \subset {\mathbb P} H^0 (X,L)^*$ admits a $(p+3)$-secant $(p+1)$-plane or $X$ is hyperelliptic (\cite[Theorem 2]{GL2}). Thus it is an interesting problem to find a finite subscheme of $C$ which obstructs property $N_{p+1}$ of $(C,\mathcal{L})$. In Theorem~\ref{thm:geometricobstruction}, we show that the failure of property $N_{p+1}$ comes from a very special geometric property of $\mathcal{L}$.  Note that $C \subset {\mathbb P}^r$ admits a $(p+3)$-secant $(p+1)$-plane if and only if $m \geq g-1$. Thus for $m \leq g-2$, it is necessary to find a new geometric obstruction of property $N_{p+1}$. Along this line, we show that the unique effective divisor $\Gamma := C \cap C_0 \in |B_{\mathcal{L}}|$ causes the failure of property $N_{p+1}$ since
\smallskip
\begin{enumerate}
\item[a.] $\Gamma$ is contained in the rational normal curve $C_0$, and
\item[b.] $\langle \Gamma \rangle$ is a $\{(p+g-m)+(g+1-m)\}$-secant $(p+g-m)$-plane to $C$ with $g+1-m \geq 3$.
\end{enumerate}
\smallskip

\noindent For example, suppose that $d=2g+1$ and $(m,b)=(g-2,5)$. Then the corresponding linearly normal curve $C \subset {\mathbb P}^{g+1}$ has no tri-secant line while it admits a $5$-secant $2$-plane. Theorem~\ref{thm:geometricobstruction} shows that the $5$-secant $2$-plane $\langle \Gamma \rangle$ provides a geometric reason why the homogeneous ideal of $C$ cannot be generated by quadrics. Therefore the factorization of $\mathcal{L}$ is deeply related to the minimal free resolution of $C \subset {\mathbb P}^r$. This result enables us to have a coherent comprehension that the failure of property $N_{p+1}$ of $(X,L)$ is always caused by the existence of an appropriate multi-secant linear space to $X \subset {\mathbb P} H^0 (X,L)^*$.

In Section 5 we consider the case where $d \leq 2g$. Theorem~\ref{thm:lowdegree} shows that many important cohomological and homological properties of $C \subset {\mathbb P}^r$ are governed by the factorization type of $\mathcal{L}$. More precisely, the Hartshorne-Rao module and the minimal free resolution are precisely determined by $(m,b)$. In particular, the Betti diagrams of two line bundles with the same factorization type are equal to each other. This enables us to understand how many distinct Betti diagrams occur at all (Theorem~\ref{thm:DistinctBettiTable} and Remark~\ref{rem:distinctBettitable})

Finally, in Section 7 we present some examples that illustrate the results proven in Section 5 and Section 6.

\vskip 1 cm

\section{The Hyperelliptic Factorization}
\noindent Throughout this section, $C$ denotes a hyperelliptic curve of arithmetic genus $g$, i.e. $g \geq 2$ and $C$ admits a degree two map $f:C \rightarrow {\mathbb P}^1$.

Let $A:= f^* \mathcal{O}_{{\mathbb P}^1} (1)$. We begin with investigating some properties of $C$ related to $A$, which belong to folklore for smooth case.

\begin{lemma}\label{lem:uniqueness}
$(1)$ $A$ is the unique line bundle on $C$ defining $g^1 _2$.

\smallskip

\noindent $(2)$ $\omega_C =A^{g-1}$ and hence $C$ is locally Gorenstein.
\end{lemma}

\begin{proof}
(1) Let $\kappa : \widetilde{C} \rightarrow C$ be the normalization of $C$, and let $\rho$ be the genus of $\widetilde{C}$. Note that $f \circ \kappa : \widetilde{C} \rightarrow {\mathbb P}^1$ is a degree two morphism. Let $A'$ be a line bundle on $C$ satisfying $\mbox{deg}(A')=h^0 (C,A')=2$, and let $h:C \rightarrow {\mathbb P}^1$ be the morphism defined by $A'$. Consider the following commutative diagram:
\begin{equation*}
\begin{CD}
\widetilde{C} & \quad \quad \stackrel{\kappa}{\rightarrow} \quad & C \\
  & h \circ \kappa   \searrow \quad &  \downarrow h            \\
  &                               &   {\mathbb P}^1
\end{CD}
\end{equation*}
To verify that $A=A'$, it suffices to show that $h \circ \kappa = f \circ \kappa$. If $\rho \geq 2$, then $\widetilde{C}$ is a smooth hyperelliptic curve and hence $h \circ \kappa = f \circ \kappa$. If $\rho =1$, let $x \in \mbox{Sing}(C)$. Then
\begin{equation*}
(h \circ \kappa)^* \mathcal{O}_{{\mathbb P}^1} (1)  = (\kappa ^* \mathcal{I}_{x/C})^{-1} = (f \circ \kappa)^* \mathcal{O}_{{\mathbb P}^1} (1)
\end{equation*}
and hence $h \circ \kappa = f \circ \kappa$. If $\rho=0$, let $\widetilde{C} \subset {\mathbb P}^2$ be the degree two embedding. Then degree two maps $\widetilde{C} \rightarrow {\mathbb P}^1$ is parameterized by ${\mathbb P}^2 \setminus \widetilde{C}$. Now let $x,y \in \mbox{Sing}(C)$. Then $(\kappa ^* \mathcal{I}_{x/C})^{-1}$ and $(\kappa ^* \mathcal{I}_{y/C})^{-1}$ determines distinct two lines which meet at a point $P \in {\mathbb P}^2$. Therefore $h \circ \kappa$ should be the linear projection of $\widetilde{C} \subset {\mathbb P}^2$ from $P$. In particular, it is uniquely determined by $\kappa$.

\smallskip

\noindent (2) Since $h^0 (C,A^{g-1}) \geq g$, we have
\begin{equation*}
h^1 (C,A^{g-1})=h^0 (\omega_C \otimes A^{-g+1})>0.
\end{equation*}
This shows that $\omega_C \otimes A^{-g+1} = \mathcal{O}_C$ since the degree of the quasi-invertible sheaf $\omega_C \otimes A^{-g+1}$ is zero. Therefore $\omega_C =A^{g-1}$ and $C$ is locally Gorenstein.
\end{proof}

Now we introduce to a natural factorization of line bundles on $C$ with respect to $A$. It will turn out through the remaining sections that all the algebraic and geometric properties of complete linear series on $C$ are explicitly determined by the type of this factorization.

\smallskip

\begin{definition}
Let $\mathcal{L}$ be a line bundle on $C$.

\smallskip

\noindent (1) The integer $m_{\mathcal{L}} := \mbox{max}~\{ t \in {\mathbb Z} ~|~ H^0 (C,\mathcal{L} \otimes A^{-t}) \neq 0 \}$ is called the multiplicity of $\mathcal{L}$. If $m_{\mathcal{L}} =0$, then we say that $\mathcal{L}$ is normalized.

\smallskip

\noindent (2) The line bundle $B_{\mathcal{L}} := \mathcal{L} \otimes A^{-m_{\mathcal{L}}}$
is called the normalized part of $\mathcal{L}$.

\smallskip

\noindent (3) The integer $b_{\mathcal{L}} := \mbox{deg}(\mathcal{L} \otimes A^{-m_{\mathcal{L}}})$
is called the normalized degree of $\mathcal{L}$.

\smallskip

\noindent (4) $\mathcal{L} =   A^{m_{\mathcal{L}}} \otimes B_{\mathcal{L}}$ is said to be the hyperelliptic factorization of $\mathcal{L}$.

\smallskip

\noindent (5) We say that $(m_{\mathcal{L}},b_{\mathcal{L}})$ is the factorization type of $\mathcal{L}$.
\end{definition}

\smallskip

\noindent The hyperelliptic factorization of $\mathcal{L}$ is unique in the sense that if $\mathcal{L} =   A^m \otimes B$ where $B$ is normalized, then $m = m_{\mathcal{L}}$ and $B = B_{\mathcal{L}}$. Also the normalized degree of $\mathcal{L}$ should satisfy $0 \leq b_{\mathcal{L}} \leq g+1$. On the other hand, for any integer $2 \leq b \leq g+1$, there exists a normalized line bundle of degree $b$. More precisely, consider smooth points $P_1 , \cdots , P_b$ of $C$ such that no two of the $P_i$'s are conjugate under the hyperelliptic involution. Then $\mathcal{O}_C (P_1 + \cdots + P_b)$ is a normalized line bundle of degree $b$ on $C$.

A crucial geometric meaning of this factorization is provided by

\smallskip
\begin{theorem}\label{thm:surface}
Let $B \in \mbox{Pic}C$ be a normalized line bundle of degree $b$, and let $S$ be the smooth rational ruled surface ${\mathbb P} (\mathcal{O}_{{\mathbb P}^1} \oplus \mathcal{O}_{{\mathbb P}^1} (b-g-1))$. Then there is an embedding of $C \subset S$ satisfying the following three conditions:

\smallskip

\begin{enumerate}
\item[(a)] $A = \mathcal{O}_S (\mathfrak{f}) \otimes \mathcal{O}_C$;
\item[(b)] $B = \mathcal{O}_S (C_0) \otimes \mathcal{O}_C$; and
\item[(c)] $C$ is linearly equivalent to $2C_0 + (2g+2-b)\mathfrak{f}$ as a divisor of $S$.
\end{enumerate}

\smallskip

\noindent Here $C_0$ and $\mathfrak{f}$ denote respectively the minimal section of $S$ and a fiber of the projection map $S \rightarrow {\mathbb P}^1$.
\end{theorem}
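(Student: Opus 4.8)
The plan is to realize $C$ inside the ruled surface as the relative projectivization $\mathbb{P}(f_* B)$ of the direct image of $B$ along the double cover $f$, and then to identify this $\mathbb{P}^1$-bundle with $S$ by computing $f_* B$ explicitly. The normalization hypothesis on $B$ is exactly what is needed to pin down the splitting type of $f_* B$ on the nose.

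First I would compute $f_* B$. Since $f$ is finite and flat of degree two (it is flat because $\mathbb{P}^1$ is regular and $C$ is Cohen--Macaulay, being Gorenstein by Lemma~\ref{lem:uniqueness}), the sheaf $f_* B$ is locally free of rank two on $\mathbb{P}^1$, hence $f_* B \cong \mathcal{O}_{\mathbb{P}^1}(a_1) \oplus \mathcal{O}_{\mathbb{P}^1}(a_2)$ with $a_1 \geq a_2$. Comparing Euler characteristics and invoking Riemann--Roch on $C$ gives $a_1 + a_2 + 2 = \chi(B) = b + 1 - g$, so that $a_1 + a_2 = b - g - 1$. To separate the two integers I use the projection formula $f_*(B \otimes A^{-t}) = f_* B \otimes \mathcal{O}_{\mathbb{P}^1}(-t)$ together with $H^0(C, \mathcal{F}) = H^0(\mathbb{P}^1, f_* \mathcal{F})$: the space $H^0(C, B \otimes A^{-t})$ is nonzero precisely when $t \leq a_1$, and therefore $m_B = a_1$. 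As $B$ is normalized we get $a_1 = 0$, whence $f_* B = \mathcal{O}_{\mathbb{P}^1} \oplus \mathcal{O}_{\mathbb{P}^1}(b-g-1)$ and $\mathbb{P}(f_* B)$ is canonically the surface $S$.

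Next I would build the morphism and check that it is a closed immersion. The counit $f^* f_* B \to B$ is surjective: on a point $x$ lying over $t \in \mathbb{P}^1$ this is the evaluation $H^0(f^{-1}(t), B|_{f^{-1}(t)}) \to B \otimes k(x)$, which is onto because $B$ restricted to the length-two fiber $f^{-1}(t)$ is globally generated. Hence there is a morphism $\iota : C \to \mathbb{P}(f_* B) = S$ over $\mathbb{P}^1$ with $\iota^* \mathcal{O}_S(1) = B$ and $\pi \circ \iota = f$. Over each $t$, the induced map on fibers is the morphism attached to the complete linear system of the (trivial) line bundle $B|_{f^{-1}(t)}$ on the length-two scheme $f^{-1}(t)$, which is a closed immersion of $f^{-1}(t)$ into $\mathbb{P}^1_{k(t)}$ as a length-two subscheme (two points, a tangent vector, or a single degree-two point, according to the splitting behaviour of $f$ over $t$). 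Since $f$ is finite and flat, fiberwise closed immersion forces $\iota$ itself to be a closed immersion. Controlling this fiberwise analysis over the ramification points of $f$, where the fibers are non-reduced, is the step I expect to demand the most care.

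Finally I would verify (a)--(c) by intersection theory on $S$. Condition (a) is immediate: $\pi \circ \iota = f$ gives $\iota^* \mathcal{O}_S(\mathfrak{f}) = \iota^* \pi^* \mathcal{O}_{\mathbb{P}^1}(1) = A$. For (b) I identify the tautological class with $C_0$: the minimal section corresponds to the quotient of $f_* B$ onto the summand $\mathcal{O}_{\mathbb{P}^1}(b-g-1)$, so that $\mathcal{O}_S(1)$ restricts to $\mathcal{O}_{\mathbb{P}^1}(b-g-1)$ along $C_0$, and a short computation (Grothendieck's relation, or the normal bundle identity $C_0^2 = b-g-1$) gives $\mathcal{O}_S(1) = \mathcal{O}_S(C_0)$; hence $\iota^* \mathcal{O}_S(C_0) = B$. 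For (c), $C \cdot \mathfrak{f} = \mbox{deg}(A) = 2$ forces $C \sim 2C_0 + c\,\mathfrak{f}$ in $\mbox{Pic}(S) = \mathbb{Z}C_0 \oplus \mathbb{Z}\mathfrak{f}$; intersecting with $C_0$ and using $C \cdot C_0 = \mbox{deg}(B) = b$ together with $C_0^2 = b-g-1$ yields $c = 2g+2-b$, as claimed.
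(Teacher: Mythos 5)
Your proof is correct, and it takes a genuinely different route from the paper. You construct the embedding intrinsically: the normalization hypothesis translates, via the projection formula, into the statement that the larger Grothendieck summand of $f_*B$ is $\mathcal{O}_{\mathbb{P}^1}$ (indeed $m_B = a_1$, which in passing also re-proves $0 \leq b \leq g+1$), and then the counit $f^*f_*B \to B$ gives the map $C \to \mathbb{P}(f_*B) = S$ over $\mathbb{P}^1$, with (a)--(c) falling out of intersection theory; the one lemma you must supply, as you note, is the standard Nakayama-type statement that a fiberwise closed immersion from a scheme finite and flat over the base (flatness holding here by miracle flatness, since $C$ is Cohen--Macaulay) is a closed immersion, including over the ramification and non-split fibers. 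The paper argues extrinsically instead: it embeds $C$ by the complete series of $A^{g+2}\otimes B$ into $\mathbb{P}^{g+b+4}$, builds the scroll $S$ as the variety of $2\times 2$ minors of the Eisenbud--Koh--Stillman $1$-generic matrix of the multiplication map $H^0(A)\otimes H^0(\mathcal{L}\otimes A^{-1}) \to H^0(\mathcal{L})$, uses Fujita's theorem (degree $\geq 2g+4$ implies cut out by quadrics, hence no trisecant lines) to force the rulings to be exactly $2$-secant and Lemma~\ref{lem:uniqueness} to get (a), and then pins down the twist $n=g+2$ in $\mathcal{O}_S(1)=\mathcal{O}_S(C_0+n\mathfrak{f})$ from $H^0(C,B)\neq 0$ and $H^0(C,A^{-1}\otimes B)=0$, with $e$ and $\ell$ recovered from degree computations --- so normalization enters at the same spot in both arguments, but for you it fixes the splitting type of $f_*B$ while for the paper it fixes the hyperplane class. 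Your approach is more elementary and self-contained (no very-ampleness, projective normality, or determinantal input, and all $0\leq b \leq g+1$ are handled uniformly), while the paper's approach simultaneously produces the projectively embedded rational normal scroll through $C\subset \mathbb{P}^r$ that is reused throughout Sections 4--6; that embedded model is of course recoverable from your abstract statement by restricting $\mathcal{O}_S(C_0+m\mathfrak{f})$, so nothing downstream is lost.
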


\smallskip

\begin{proof}
Consider the line bundle $\mathcal{L}:=A^{g+2} \otimes B$ on $C$ of degree $d= 2g+4+b$. Since $b \geq 0$,
$\mathcal{L}$ a non-special very ample line bundle, and that the linearly normal curve
\begin{equation*}
C \subset {\mathbb P} H^0 (C,\mathcal{L})^* = {\mathbb P}^r , \quad r=g+b+4,
\end{equation*}
embedded by the complete linear series $|\mathcal{L}|$ is cut out by quadrics (cf. \cite{F}). In particular, $C \subset {\mathbb P}^r$ does not admit a tri-secant line.

\smallskip

We first construct a smooth rational normal surface scroll $S \subset {\mathbb P}^r$ which contains $C$. This can be done by Eisenbud-Koh-Stillman's method in \cite{EKS}. One can also find the details in \cite[Chapter VI]{E}. Consider the multiplication map
\begin{equation*}
\mu : H^0 (C,A) \otimes H^0 (C,\mathcal{L}\otimes A^{-1}) \rightarrow H^0 (C,\mathcal{L}).
\end{equation*}
If we choose bases $\{e_1, e_2\}$ and $\{f_1 , \cdots, f_{r-1} \}$ of $H^0 (C,A)$ and $H^0 (C,\mathcal{L}\otimes A^{-1})$, respectively, then the $2 \times (r-1)$ matrix
\begin{equation*}
M(A,\mathcal{L} \otimes A^{-1}) := ( \mu (e_i \otimes f_j))
\end{equation*}
can be regarded as a matrix of linear forms on ${\mathbb P}^r$. This is a $1$-generic matrix and hence its $2 \times 2$ minors define a rational normal surface scroll $S \subset {\mathbb P}^r$ which contains $C$. Also $S$ is smooth since $\mu$ is surjective. Geometrically, $S$ can be defined as
\begin{equation*}
S = \bigcup_{y \in {\mathbb P}^1} <f^{-1} (y)> \subset {\mathbb P}^r
\end{equation*}
where $f : C \rightarrow {\mathbb P}^1$ is the degree two map.

\smallskip

Now we will determine the numerical type of $S$ and the divisor class of $C$ in $S$. Since $S$ is a smooth rational ruled surface, there is an integer $e \geq 0$ such that
\begin{equation*}
S = {\mathbb P} (\mathcal{O}_{{\mathbb P}^1} \oplus \mathcal{O}_{{\mathbb P}^1} (-e)).
\end{equation*}
Let $C_0$ and $\mathfrak{f}$ be respectively the minimal section of $S$ and a fiber of the projection map $\pi : S \rightarrow {\mathbb P}^1$. Then $C \equiv kC_0 + \ell \mathfrak{f}$ for some $k \geq 1$ and $\ell \in {\mathbb Z}$. Since $C$ is not a smooth rational curve, $k \geq 2$. On the other hand, the rulings of $S$ are $k$-secant lines to $C$. So $C \subset {\mathbb P}^r$ cannot be cut out by quadrics if $k \geq 3$. In conclusion, $k=2$ and so the restriction of $\pi : S \rightarrow {\mathbb P}^1$ to $C$ is a degree two map. Since this restriction map is defined by $\mathcal{O}_S (\mathfrak{f})\otimes \mathcal{O}_C$, Lemma \ref{lem:uniqueness}.$(1)$ guarantees that $\mathcal{O}_S (\mathfrak{f}) \otimes \mathcal{O}_C = A$, which completes the proof of (a).

\smallskip

To verify (b) and (c), write the hyperplane bundle $\mathcal{O}_S (1)$ of $S \subset {\mathbb P}^r$ as $\mathcal{O}_S (C_0 + n \mathfrak{f})$, $n \in {\mathbb Z}$. We will show that $n=g+2$. Since $\mathcal{O}_S (\mathfrak{f}) \otimes \mathcal{O}_C = A$ and $\mathcal{O}_S (1) \otimes \mathcal{O}_C = \mathcal{L}$, the exact sequence
\begin{equation*}
0 \rightarrow \mathcal{O}_S (-C) \rightarrow \mathcal{O}_S \rightarrow \mathcal{O}_C \rightarrow 0,
\end{equation*}
induces the isomorphism

\smallskip

\renewcommand{\descriptionlabel}[1]%
             {\hspace{\labelsep}\textrm{#1}}
\begin{description}
\setlength{\labelwidth}{13mm}
\setlength{\labelsep}{1.5mm}
\setlength{\itemindent}{0mm}

\item[(2.1)] $H^i (S,\mathcal{O}_S (C_0 + (n-j) \mathfrak{f})) \cong H^i (C,A^{g+2-j} \otimes B)$
\end{description}

\smallskip

\noindent for $i \in \{0,1 \}$ and all $j \in {\mathbb Z}$. In particular,
\begin{equation*}
H^0 (S,\mathcal{O}_S (C_0 + (n-g-2) \mathfrak{f})) \cong H^0 (C, B) \neq 0
\end{equation*}
while
\begin{equation*}
H^0 (S,\mathcal{O}_S (C_0 + (n-g-3) \mathfrak{f})) \cong H^0 (C,A^{-1} \otimes B)=0.
\end{equation*}
This implies that $n=g+2$. Now we have
\begin{equation*}
\mathcal{O}_S (C_0) \otimes \mathcal{O}_C = \mathcal{L} \otimes A^{-g-2} = B .
\end{equation*}
The value of $e$ is determined by using the degree of $S \subset {\mathbb P}^r$. Indeed
\begin{equation*}
\mbox{deg}(S)= g+3+b = (C_0 + (g+2)\mathfrak{f})^2 = -e+2(g+2)
\end{equation*}
which shows that $e=g+1-b$. Finally, we determine the value of $\ell$ from the equality
$\mbox{deg}(\mathcal{L})= (C_0 + (g+2)\mathfrak{f}).(2C_0 +\ell \mathfrak{f})$, where
\begin{equation*}
\mbox{deg}(\mathcal{L})=2g+4+b \quad \mbox{and} \quad (C_0 + (g+2)\mathfrak{f}).(2C_0 +\ell \mathfrak{f})=-2e+\ell+2(g+2).
\end{equation*}
Then we have $\ell = b+2e = 2g+2-b$.
\end{proof}

\vskip 1 cm

\section{Base Point Freeness And Very Ampleness}
\noindent Throughout this section we keep the previously introduced notation. In particular, $C$ denotes a hyperelliptic curve of arithmetic genus $g$.

The main purpose of this section is to prove

\begin{theorem}\label{thm:main1}
Let $\mathcal{L}$ be a line bundle on $C$ with the factorization type $(m,b)$. Then

\smallskip

\begin{enumerate}
\item[(1)] For each $i \in \{ 0,1 \}$,
\begin{equation*}
h^i (C,\mathcal{L}) = h^i ({\mathbb P}^1,\mathcal{O}_{{\mathbb P}^1} (m)) + h^i ({\mathbb P}^1,\mathcal{O}_{{\mathbb P}^1} (m+b-g-1)).
\end{equation*}
In particular, $\mathcal{L}$ is non-special if and only if $m+b \geq g$.
\smallskip
\item[(2)] $\mathcal{L}$ is base point free if and only if either \smallskip
\begin{enumerate}
\item[a.] $b=0$ $($and hence $B_{\mathcal{L}} =\mathcal{O}_C)$ and $m \geq 0$ or
\item[b.] $1 \leq b \leq g+1$ and $m +b \geq g+1$.
\end{enumerate}

\smallskip

\item[(3)] $\mathcal{L}$ is very ample if and only if either \smallskip
\begin{enumerate}
\item[a.] $b=0$ $($and hence $B_{\mathcal{L}}=\mathcal{O}_C)$ and $m \geq g+1$ or
\item[b.] $b=1$ $($and hence $B_{\mathcal{L}}=\mathcal{O}_C (P)$ for some $P \in C)$ and $m \geq g$ or
\item[c.] $2 \leq b \leq g+1$ and $m +b \geq g+2$.
\end{enumerate}
\end{enumerate}
\end{theorem}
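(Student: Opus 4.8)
The plan is to transport every question to the rational ruled surface $S=\mathbb{P}(\mathcal{O}_{\mathbb{P}^1}\oplus\mathcal{O}_{\mathbb{P}^1}(b-g-1))$ supplied by Theorem~\ref{thm:surface}. Writing $e=g+1-b\ge 0$, conditions (a)--(c) of that theorem identify $\mathcal{L}=A^{m}\otimes B$ with the restriction $\mathcal{O}_S(C_0+m\mathfrak{f})\otimes\mathcal{O}_C$, and give $C\equiv 2C_0+(2g+2-b)\mathfrak{f}$. The backbone of all three parts is the restriction sequence
\begin{equation*}
0\to \mathcal{O}_S(-C_0+(m-2g-2+b)\mathfrak{f})\to \mathcal{O}_S(C_0+m\mathfrak{f})\to \mathcal{L}\to 0,
\end{equation*}
together with the observation that the kernel, having degree $-1$ on each fibre of $\pi:S\to\mathbb{P}^1$, satisfies $R^i\pi_*=0$ for all $i$ and hence has vanishing cohomology on $S$. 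Consequently the restriction map $H^i(S,\mathcal{O}_S(C_0+m\mathfrak{f}))\to H^i(C,\mathcal{L})$ is an isomorphism for $i=0,1$; in particular $H^0(C,\mathcal{L})$ is canonically the space of sections of $\mathcal{O}_S(C_0+m\mathfrak{f})$.

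For (1) I would push $\mathcal{O}_S(C_0+m\mathfrak{f})$ down by $\pi$. Since it restricts to $\mathcal{O}(1)$ on fibres, $R^1\pi_*=0$ and $\pi_*\mathcal{O}_S(C_0+m\mathfrak{f})=\mathcal{O}_{\mathbb{P}^1}(m)\oplus\mathcal{O}_{\mathbb{P}^1}(m-e)$; Leray then gives $H^i(S,\mathcal{O}_S(C_0+m\mathfrak{f}))\cong H^i(\mathbb{P}^1,\mathcal{O}(m))\oplus H^i(\mathbb{P}^1,\mathcal{O}(m+b-g-1))$, which combined with the isomorphism above is exactly the asserted formula. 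The non-speciality statement drops out: $h^1$ vanishes iff $m\ge -1$ and $m+b\ge g$, and since $b\le g+1$ the second inequality already forces the first.

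For (2) the point is that base loci transfer verbatim under the isomorphism $H^0(S,\mathcal{O}_S(C_0+m\mathfrak{f}))\cong H^0(C,\mathcal{L})$: a point $x\in C$ is a base point of $|\mathcal{L}|$ exactly when it lies in $\mathrm{Bs}\,|C_0+m\mathfrak{f}|$. The base locus of $|C_0+m\mathfrak{f}|$ on a Hirzebruch surface is classical: the system is empty for $m<0$, has $C_0$ as its unique fixed component for $0\le m<e$, and is base-point-free for $m\ge e$. Intersecting with $C$ and recalling $\mathcal{O}_S(C_0)\otimes\mathcal{O}_C=B$, the fixed section contributes the divisor $\Gamma:=C\cap C_0\in|B|$ of degree $b$. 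Hence $|\mathcal{L}|$ is base-point-free iff $m\ge e$ (that is, $m+b\ge g+1$), or $0\le m<e$ with $\Gamma$ empty (that is, $b=0$ and $m\ge 0$); this is precisely alternatives (a) and (b). The degenerate case $e=0$, i.e.\ $b=g+1$, is handled the same way on $\mathbb{P}^1\times\mathbb{P}^1$.

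Part (3) is where the real work lies. For sufficiency, when $m+b\ge g+2$ one has $m\ge e+1$ (resp.\ $m\ge 1$ if $e=0$), so $\mathcal{O}_S(C_0+m\mathfrak{f})$ is very ample on $S$ by the standard Hirzebruch-surface criterion; restricting the embedding of $S$ to $C\subset S$ makes $\mathcal{L}$ very ample, which yields alternative (c) and the ranges $m\ge g+2$, $m\ge g+1$ in (a),(b). The main obstacle is the boundary $m+b=g+1$, i.e.\ $m=e$: here $\mathcal{O}_S(C_0+e\mathfrak{f})$ is only base-point-free, and since $(C_0+e\mathfrak{f})\cdot C_0=0$ it contracts $C_0$ to the vertex of a cone. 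Thus whether $\mathcal{L}$ stays very ample is governed by how $C$ meets the contracted section, i.e.\ by $\deg\Gamma=b$: for $b\ge 2$ at least two points of $\Gamma$ collapse to the vertex, so $\varphi_{\mathcal{L}}$ is not injective; for $b=1$ exactly one point maps to the vertex and for $b=0$ the curve avoids $C_0$ altogether. To finish the $b\le 1$ boundary cases -- and in particular to check separation of tangent vectors at the single point of $\Gamma$ when $b=1$ -- I would switch to the cohomological criterion: $\mathcal{L}$ is very ample iff $h^1(C,\mathcal{L}(-Z))=h^1(C,\mathcal{L})$ for every length-two subscheme $Z$, which by Serre duality and $\omega_C=A^{g-1}$ (Lemma~\ref{lem:uniqueness}) becomes $h^0(A^{g-1-m}\otimes B^{-1}(Z))=h^0(A^{g-1-m}\otimes B^{-1})$; a short degree count closes these cases. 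Finally, for $b=0$ and $m\le g$ part (1) gives $h^0(C,A^m)=m+1$, so every section is pulled back along the degree-two $f$ and $\varphi_{A^m}$ factors through $f$, hence is two-to-one and not very ample -- this pins down the threshold $m\ge g+1$ in (a) and explains why (a) and (b) are shifted relative to (c).
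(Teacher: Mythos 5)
Your proposal is correct and takes essentially the same route as the paper: transport everything to the scroll $S$ of Theorem~\ref{thm:surface}, compute $h^i(C,\mathcal{L})$ from the restriction sequence (whose kernel has fibre degree $-1$, hence no cohomology) together with $\pi_*\mathcal{O}_S(C_0+m\mathfrak{f})=\mathcal{O}_{{\mathbb P}^1}(m)\oplus\mathcal{O}_{{\mathbb P}^1}(m+b-g-1)$, transfer base loci via $H^0(S,\mathcal{O}_S(C_0+m\mathfrak{f}))\cong H^0(C,\mathcal{L})$, and settle very ampleness by the Hirzebruch criterion for $m+b\geq g+2$ and the cone contraction of $C_0$ at the boundary $m+b=g+1$. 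If anything you are slightly more careful than the paper at the boundary cases $b\leq 1$, where the paper concludes very ampleness directly from $\mathrm{length}(C\cap C_0)\leq 1$, while you additionally verify separation of tangent vectors at the point over the vertex via the length-two subscheme criterion and a degree count against $\omega_C=A^{g-1}$.
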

\smallskip

\begin{proof}
Let $S$ be the rational ruled surface ${\mathbb P} (\mathcal{O}_{{\mathbb P}^1} \oplus \mathcal{O}_{{\mathbb P}^1} (b-g-1))$ with the minimal section  $C_0$ and a ruling $\mathfrak{f}$. By Theorem \ref{thm:surface}, $C$ is contained in $S$ as a divisor linearly equivalent to $2C_0 +(2g+2-b)\mathfrak{f}$. Moreover, $A = \mathcal{O}_S (\mathfrak{f}) \otimes \mathcal{O}_C$ and $b = \mathcal{O}_S (C_0) \otimes \mathcal{O}_C$. In particular, $\mathcal{L}$ is equal to the restriction of the line bundle $\mathcal{O}_S (C_0 + m \mathfrak{f})$ on $S$ to $C$.

\smallskip

\noindent (1) By (2.1), we have
\begin{equation*}
\begin{CD}
h^i (C,\mathcal{L}) & \quad = \quad & h^i (C,A^{m} \otimes b) \quad \quad \quad \quad \quad \quad \quad \quad \quad \quad \quad \quad \quad \quad\\
                    & \quad = \quad & h^i (S,\mathcal{O}_S (C_0+m \mathfrak{f})) \quad \quad \quad \quad \quad \quad \quad \quad \quad \quad \quad \quad\\
                    & \quad = \quad & h^i ({\mathbb P}^1,\mathcal{O}_{{\mathbb P}^1} (m)) + h^i ({\mathbb P}^1,\mathcal{O}_{{\mathbb P}^1} (m+b-g-1))
\end{CD}
\end{equation*}
for all $i \in \{0,1 \}$.

\smallskip

\noindent (2) Remember that $0 \leq b \leq g+1$. If $b =0$ and hence $B=\mathcal{O}_C$, then $\mathcal{L}= A^{m}$ is base point free if and only if $m \geq 0$.
Now assume that $b \geq 1$. Since $H^0 (C,\mathcal{L}) \cong H^0 (S,\mathcal{O}_S (C_0 + m \mathfrak{f}))$, we have
\begin{equation*}
\mbox{Bs}~|\mathcal{L}| = C \cap \mbox{Bs}~|\mathcal{O}_S (C_0 + m \mathfrak{f})|.
\end{equation*}
For $m < g+1-b$, $C_0 \subset \mbox{Bs}~|\mathcal{O}_S (C_0 + m \mathfrak{f})|$ and hence $C \cap C_0 \subset \mbox{Bs}~|\mathcal{L}|$. This shows that $\mathcal{L}$ fails to be base point free since $\mbox{length}~(C \cap C_0)=b \geq 1$.
For $m \geq g+1-b$, $\mbox{Bs}~|\mathcal{O}_S (C_0 + m \mathfrak{f})| =\emptyset$ and so $\mathcal{L}$ is base point free.

\smallskip

\noindent (3) We need to classify all very ample line bundles among base point free line bundles. For $m \geq g+2-b$, $\mathcal{O}_S (C_0 +m \mathfrak{f})$ is a very ample line bundle on $S$, and so $\mathcal{L}$ is a very ample line bundle on $C$.
Now assume that $m \leq g+1-b$. By $(2)$, $\mathcal{L}$ is a base point free line bundle if and only if either

\smallskip

\begin{enumerate}
\item[$(\alpha)$] $b=0$ and $0 \leq m \leq g+1$ or else
\item[$(\beta)$] $1 \leq b \leq g+1$ and $m=g+1-b$.
\end{enumerate}

\smallskip

\noindent Clearly $A^m$ is very ample if and only if $m \geq g+1$. Also for the case $(\beta)$, the line bundle $\mathcal{O}_S (C_0 + (g+1-b) \mathfrak{f})$ is base point free, and defines an embedding of $S \setminus C_0$. Furthermore $C_0$ maps to a point. Observe that $\mbox{length}~(C \cap C_0)=b$. Since $H^0 (S,\mathcal{O}_S (C_0 + m \mathfrak{f})) \cong H^0 (C,\mathcal{L})$, we can conclude that $\mathcal{L}$ is very ample if and only if $b=1$, which completes the proof of (3).
\end{proof}

\smallskip

\begin{corollary}\label{cor:RR}
Let $B \in \mbox{Pic}C$ be a normalized line bundle of degree $b$. Then
$$h^0 (C,B)= \begin{cases} 1 & \mbox{for $0 \leq b \leq g$, and}\\
                           2 & \mbox{for $b=g+1$}. \end{cases} $$
\end{corollary}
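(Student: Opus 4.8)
The plan is to read this off directly from the Riemann--Roch computation already established in Theorem~\ref{thm:main1}$(1)$. The only conceptual step is to identify the factorization type of $B$. By hypothesis $B$ is \emph{normalized}, which by the Definition means precisely that $m_B = 0$; since moreover $\deg(B)=b$ by assumption, the factorization type of $B$ is $(m,b)=(0,b)$. I would state this identification explicitly at the outset, because it is the single observation that makes the whole corollary immediate.

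Having fixed $(m,b)=(0,b)$, I would substitute $m=0$ and $i=0$ into the formula of Theorem~\ref{thm:main1}$(1)$ to obtain
\begin{equation*}
h^0(C,B) = h^0\bigl({\mathbb P}^1,\mathcal{O}_{{\mathbb P}^1}(0)\bigr) + h^0\bigl({\mathbb P}^1,\mathcal{O}_{{\mathbb P}^1}(b-g-1)\bigr).
\end{equation*}
The first summand equals $1$. For the second summand I would use the elementary fact that $h^0({\mathbb P}^1,\mathcal{O}_{{\mathbb P}^1}(t))$ equals $t+1$ when $t\geq 0$ and vanishes when $t<0$. Recalling the standing constraint $0\leq b\leq g+1$ on a normalized degree, the exponent $b-g-1$ is negative exactly when $0\leq b\leq g$, in which case the second summand is $0$ and $h^0(C,B)=1$; and it is zero exactly when $b=g+1$, in which case the second summand is $h^0({\mathbb P}^1,\mathcal{O}_{{\mathbb P}^1}(0))=1$ and $h^0(C,B)=2$. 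This is precisely the asserted case split.

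There is essentially no obstacle here: the corollary is a direct specialization of Theorem~\ref{thm:main1}$(1)$ to a normalized bundle, and the remaining work is the trivial evaluation of line-bundle cohomology on ${\mathbb P}^1$. If a more self-contained argument were desired one could instead invoke Riemann--Roch together with $\omega_C = A^{g-1}$ from Lemma~\ref{lem:uniqueness}$(2)$ to compute $h^1(C,B)=h^0(C,A^{g-1}\otimes B^{-1})$, but this reintroduces the very surface-theoretic analysis that Theorem~\ref{thm:main1} has already packaged, so I would keep the proof as the one-line substitution above. As a sanity check, the output $h^0(C,B)\geq 1$ is consistent with $B$ being normalized (hence effective), which is reassuring.
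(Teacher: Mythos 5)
Your proof is correct and is exactly the paper's argument: the paper likewise specializes Theorem~\ref{thm:main1}(1) to the factorization type $(0,b)$ of a normalized bundle, obtaining $h^0(C,B)=1+h^0({\mathbb P}^1,\mathcal{O}_{{\mathbb P}^1}(b-g-1))$, and the case split follows from the elementary cohomology of line bundles on ${\mathbb P}^1$. Your write-up merely spells out the substitution and evaluation that the paper leaves implicit.
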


\smallskip

\begin{proof}
By Theorem \ref{thm:main1}.(1), $h^0 (C,B)=1+h^0 ({\mathbb P}^1,\mathcal{O}_{{\mathbb P}^1} (b-g-1))$.
\end{proof}

\smallskip

\begin{corollary}\label{cor:NotVeryample}
Let $\mathcal{L}$ be a base point free ample line bundle on $C$ with the hyperelliptic factorization type $(m,b)$. Then $\mathcal{L}$ fails to be very ample if and only if
\begin{enumerate}
\item[($\alpha$)] $\mathcal{L} =A^m$ for some $1 \leq m \leq g$;
\item[($\beta$)] $2 \leq b \leq g$ $($and hence $h^0 (C,b)=1$$)$ and $m + b =g+1$;
\item[($\gamma$)] $\mathcal{L}$ is a normalized line bundle of degree $g+1$.
\end{enumerate}
\end{corollary}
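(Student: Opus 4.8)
The plan is to derive this corollary by a direct comparison of parts (2) and (3) of Theorem~\ref{thm:main1}, after first translating the ampleness hypothesis into a condition on $(m,b)$. Since $C$ is an integral projective curve, a line bundle on $C$ is ample if and only if it has positive degree, and here $\deg \mathcal{L} = 2m+b$. The only base point free line bundle of degree $0$ is $\mathcal{O}_C$, which corresponds to $(m,b)=(0,0)$ (case a.\ of Theorem~\ref{thm:main1}.(2) with $m=0$). For every base point free line bundle of type b.\ one has $m+b\geq g+1$, hence $m\geq 0$ and $\deg\mathcal{L}\geq b\geq 1>0$; and for type a.\ with $m\geq 1$ the degree is $2m>0$. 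Thus I would first record that, among the base point free line bundles classified in Theorem~\ref{thm:main1}.(2), the ample ones are precisely those with $(m,b)\neq(0,0)$.

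Next I would run a case analysis on $b$. For $b=0$ a base point free ample line bundle is $\mathcal{L}=A^m$ with $m\geq 1$, while by Theorem~\ref{thm:main1}.(3).a it is very ample exactly when $m\geq g+1$; so it fails to be very ample precisely when $1\leq m\leq g$, which is case $(\alpha)$. For $b=1$, base point freeness requires $m+1\geq g+1$, i.e.\ $m\geq g$, and this coincides verbatim with the very ampleness criterion of Theorem~\ref{thm:main1}.(3).b; hence no base point free line bundle with $b=1$ can fail to be very ample, so this value of $b$ contributes nothing to the list.

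For $2\leq b\leq g+1$, base point freeness gives $m+b\geq g+1$ (Theorem~\ref{thm:main1}.(2).b) while very ampleness gives $m+b\geq g+2$ (Theorem~\ref{thm:main1}.(3).c), so a base point free line bundle fails to be very ample exactly when $m+b=g+1$. I would then split off the boundary value of $b$. If $2\leq b\leq g$, then $m=g+1-b\geq 1$, so $\mathcal{L}$ is genuinely a positive power of $A$ times the normalized part, and Corollary~\ref{cor:RR} yields $h^0(C,B)=1$; this is case $(\beta)$. If instead $b=g+1$, then $m+b=g+1$ forces $m=0$, so $\mathcal{L}=B$ is a normalized line bundle of degree $g+1$, which is case $(\gamma)$. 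Noting that $(\alpha)$, $(\beta)$, $(\gamma)$ are mutually exclusive (they have $b=0$, $2\leq b\leq g$, and $b=g+1$ respectively) and together exhaust all base point free ample line bundles with $m+b\leq g+1$ and $b\neq 1$, the stated equivalence follows.

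There is no deep obstacle here: the argument is essentially bookkeeping on top of Theorem~\ref{thm:main1}. The only points that require care are the translation of \emph{ample} into the degree inequality $2m+b>0$, which is what removes the trivial bundle $(m,b)=(0,0)$ from the list, and the verification that the case division in $b$ is both exhaustive and disjoint. In particular the plan must make explicit that $b=1$ produces no examples, and that the degenerate boundary case $b=g+1$ with $m+b=g+1$ gives the normalized line bundle of $(\gamma)$ rather than being absorbed into $(\beta)$.
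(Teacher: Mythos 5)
Your proposal is correct and takes essentially the same approach as the paper: the paper's own proof is the one-line remark that the statement ``comes immediately by Theorem~\ref{thm:main1}.(2) and (3),'' and your case analysis on $b$ is precisely the bookkeeping that remark leaves implicit. Your only added ingredient --- translating ampleness into $\deg\mathcal{L}=2m+b>0$ so as to exclude $(m,b)=(0,0)$, and checking that $b=1$ contributes nothing --- is exactly the verification the paper expects the reader to supply, and you carry it out correctly.
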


\smallskip

\begin{proof}
The assertion comes immediately by Theorem \ref{thm:main1}.(2) and (3).
\end{proof}

\smallskip

We conclude this section by investigating the maps of $C$ to projective spaces defined by $\mathcal{L}$ in Corollary \ref{cor:NotVeryample}.($\alpha$) $\sim$ ($\gamma$). We obtain the following

\smallskip

\begin{theorem}\label{thm:main2}
Let $\mathcal{L}$ be a base point free ample line bundle on $C$, defining a morphism $\varphi_{\mathcal{L}} : C \rightarrow {\mathbb P}^r$, $r=h^0 (C,\mathcal{L})-1$. Then

\smallskip

\begin{enumerate}
\item[$(\alpha)$] If $\mathcal{L}=A^m$ for some $1 \leq m \leq g$, then $r=m$ and $\varphi$ consists of the double covering $f : C \rightarrow {\mathbb P}^1$ followed by the $m$-uple embedding of ${\mathbb P}^1$ in ${\mathbb P}^m$. In particular, the image $\varphi_{\mathcal{L}} (C) \subset {\mathbb P}^m$ is a rational normal curve of degree $m$.\smallskip
\item[$(\beta)$] Assume that $2 \leq b \leq g$ $($and hence $h^0 (C,b)=1)$ and $m(\mathcal{L})+b(\mathcal{L})=g+1$.
    Let $P_1 + \cdots +P_{b }$ be the unique divisor in $|b|$. Then $\mathcal{L}$ is nonspecial and the morphism $\varphi_{\mathcal{L}} : C \rightarrow {\mathbb P}^{d-g}$, $d=\mbox{deg}(\mathcal{L})$, is birational onto its image. More precisely, $\varphi_{\mathcal{L}}$ maps $\{P_1,\cdots,P_{b }\}$ to a point $q \in {\mathbb P}^{d-g}$ and the restriction map
\begin{equation*}
\varphi_{\mathcal{L}}\upharpoonright_{C \setminus \{P_1,\cdots,P_{ b} \}}: C \setminus \{P_1,\cdots,P_{ b} \} \rightarrow \varphi_{\mathcal{L}} (C) \setminus \{q\}
\end{equation*}
is an isomorphism.\smallskip
\item[$(\gamma)$] If $\mathcal{L}$ is a normalized line bundle of degree $g+1$, then $r=1$ and the map $\varphi: C \rightarrow {\mathbb P}^1$ is a $(g+1)$-fold covering.
\end{enumerate}
\end{theorem}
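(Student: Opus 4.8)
The plan is to realize $\varphi_{\mathcal{L}}$ in each case as the restriction to $C$ of a morphism of the ambient ruled surface $S$ of Theorem~\ref{thm:surface}, where $\mathcal{L} = \mathcal{O}_S(C_0 + m\mathfrak{f})\otimes\mathcal{O}_C$ and, writing $\Gamma := C \cap C_0$, one has $\Gamma \in |B|$ with $\mbox{length}(\Gamma) = C\cdot C_0 = b$. Cases $(\alpha)$ and $(\gamma)$ are quick and I would dispatch them first. In $(\alpha)$, $\mathcal{L} = A^m = f^*\mathcal{O}_{\mathbb{P}^1}(m)$, and since Theorem~\ref{thm:main1}.(1) gives $h^0(C, A^m) = (m+1) + h^0(\mathbb{P}^1,\mathcal{O}_{\mathbb{P}^1}(m-g-1)) = m+1$ for $1\le m\le g$, the pullback $f^*\colon H^0(\mathbb{P}^1,\mathcal{O}_{\mathbb{P}^1}(m)) \to H^0(C, A^m)$ is a dimension-counting isomorphism; hence $\varphi_{\mathcal{L}}$ factors as $f$ followed by the complete map of $|\mathcal{O}_{\mathbb{P}^1}(m)|$, i.e.\ the $m$-uple embedding into $\mathbb{P}^m$, whose image is a rational normal curve of degree $m = r$. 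In $(\gamma)$, Theorem~\ref{thm:main1}.(1) gives $h^0(C,\mathcal{L})=2$, so $\varphi_{\mathcal{L}}\colon C\to\mathbb{P}^1$ is a finite surjection with $\varphi_{\mathcal{L}}^*\mathcal{O}_{\mathbb{P}^1}(1)=\mathcal{L}$, and the projection formula forces its degree to equal $\deg\mathcal{L}=g+1$.

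For the substantive case $(\beta)$, with $2\le b\le g$ and $m = g+1-b$, the surface is $S=\mathbb{P}(\mathcal{O}_{\mathbb{P}^1}\oplus\mathcal{O}_{\mathbb{P}^1}(-e))$ with $e = g+1-b = m$ and $1\le e\le g-1$. The key input, already recorded in the proof of Theorem~\ref{thm:main1}.(3), is that $|\mathcal{O}_S(C_0 + m\mathfrak{f})|$ is base point free, restricts to an embedding on $S\setminus C_0$, and contracts $C_0$ to a single point $q$. Via the isomorphism (2.1), $H^0(C,\mathcal{L})\cong H^0(S,\mathcal{O}_S(C_0+m\mathfrak{f}))$, so $\varphi_{\mathcal{L}}$ is exactly the restriction to $C$ of this surface morphism $\psi\colon S\to\mathbb{P}^{d-g}$. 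Nonspeciality of $\mathcal{L}$ and the equality $r = d-g$ follow from Theorem~\ref{thm:main1}.(1) since $m+b = g+1 \ge g$.

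It then remains to transcribe the behavior of $\psi$ onto $C$. By Corollary~\ref{cor:RR}, $h^0(C,B)=1$ (as $b\le g$), so $|B|$ has the unique member $\Gamma = C\cap C_0 = P_1+\cdots+P_b$; since $\psi$ contracts $C_0$, all of $P_1,\dots,P_b$ are sent to $q$. I would then restrict the closed immersion $\psi|_{S\setminus C_0}$ to the closed subscheme $C\cap(S\setminus C_0) = C\setminus\Gamma$ to conclude that $\varphi_{\mathcal{L}}|_{C\setminus\Gamma}$ is again a closed immersion, while $\psi^{-1}(q)=C_0$ yields $\varphi_{\mathcal{L}}^{-1}(q)=\Gamma$; together these give that $\varphi_{\mathcal{L}}|_{C\setminus\Gamma}\colon C\setminus\Gamma\to\varphi_{\mathcal{L}}(C)\setminus\{q\}$ is an isomorphism, and in particular $\varphi_{\mathcal{L}}$ is birational onto its image.

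The main obstacle I anticipate is this last step of case $(\beta)$: upgrading a bijective immersion to an isomorphism onto its image when $C$ may be singular. I would handle this scheme-theoretically, using that a closed immersion on $S\setminus C_0$ restricts to a closed immersion on the closed subscheme $C\cap(S\setminus C_0)$, so that singular points of $C$ present no difficulty. The delicate point is to verify that $q$ has no preimage on $C\setminus\Gamma$, i.e.\ that $\psi^{-1}(q)=C_0$ (set-theoretically suffices for the stated conclusion); establishing this contraction statement carefully, rather than merely quoting the embedding-of-$S\setminus C_0$ assertion, is where I expect the real work to lie.
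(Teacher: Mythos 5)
Your proposal is correct and follows essentially the same route as the paper: cases $(\alpha)$ and $(\gamma)$ are dispatched directly, and case $(\beta)$ is handled by identifying $\varphi_{\mathcal{L}}$ with the restriction to $C$ of the morphism of $S$ defined by $|\mathcal{O}_S(C_0+(g+1-b)\mathfrak{f})|$, which embeds $S\setminus C_0$ and contracts $C_0$ to $q$, so that $\varphi_{\mathcal{L}}^{-1}(q)=C\cap C_0=\Gamma$ and $C\setminus\Gamma\cong\varphi_{\mathcal{L}}(C)\setminus\{q\}$. Your only deviations are cosmetic --- an explicit dimension count via Theorem~\ref{thm:main1}.(1) in $(\alpha)$, and a degree/projection-formula argument in $(\gamma)$ where the paper instead restricts the second projection of $S\cong{\mathbb P}^1\times{\mathbb P}^1$ --- and your added scheme-theoretic care in $(\beta)$ merely makes explicit what the paper leaves implicit.
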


\smallskip

\begin{proof}
For $(\alpha)$, note that $\mathcal{L} = f^* \mathcal{O}_{{\mathbb P}^1} (m)$ and also $H^0 (C,\mathcal{L}) = f^* H^0 ({\mathbb P}^1,\mathcal{O}_{{\mathbb P}^1} (m))$ since $m \leq g$. This completes the proof.

For $(\beta)$, let us remind of the proof of Theorem \ref{thm:main1}.$(3)$. Indeed $\mathcal{L}$ is the restriction of the line bundle $\mathcal{O}_S (C_0 + (g+1-b) \mathfrak{f})$ on $S$, which is base point free and defines an embedding of $S \setminus C_0$ to ${\mathbb P}^r$. Furthermore $C_0$ maps to a point, say $q \in {\mathbb P}^r$. Since $\varphi_{\mathcal{L}} ^{-1} (q) = C \cap C_0 = \{P_1,\cdots,P_{ b} \}$, we have the desired isomorphism
\begin{equation*}
C \setminus \{P_1,\cdots,P_{b} \} \cong \varphi_{\mathcal{L}} (C) \setminus \{q\}.
\end{equation*}

For $(\gamma)$, Theorem \ref{thm:surface} asserts that $C$ is contained in $S \cong {\mathbb P}_1 ^1 \times {\mathbb P}_2 ^1$ as a divisor linearly equivalent to $2C_0 + (g+1) \mathfrak{f}$ and $\mathcal{L} = \mathcal{O}_S (C_0 ) \otimes \mathcal{O}_C$. Since the morphism $\varphi_{\mathcal{L}}: C \rightarrow {\mathbb P}^1$ is the restriction of the second projection map $S \rightarrow {\mathbb P}_2 ^1$ to $C$, it is a $(g+1)$-fold covering.
\end{proof}

\vskip 1 cm

\section{Minimal Free Resolution I - High Degree}
\noindent We now wish to give a precise description of the minimal free resolution of a linearly normal hyperelliptic curve $C$ of arithmetic genus $g$ and of degree $d \geq 2g+1$. The case where $d \leq 2g$ will be dealt with in the next section.

Throughout this section, let $\mathcal{L}$ be a line bundle on $C$ of degree $d \geq 2g+1$. Thus $\mathcal{L}$ is non-special and very ample, and defines a linearly normal embedding
\begin{equation*}
C \subset {\mathbb P} H^0 (C,\mathcal{L})^* ={\mathbb P}^r,\quad r=d-g.
\end{equation*}
Let $R$ be the homogeneous coordinate ring of ${\mathbb P}^r$ and let $I_C$ be the vanishing ideal of $C$.
By a result in \cite{F}, $C$ is projectively normal and $3$-regular. Thus a minimal free resolution of $I_C$ over $R$ is of the form
\begin{equation*}
0 \rightarrow F_{r-1} \rightarrow \cdots \rightarrow F_i \rightarrow \cdots \rightarrow F_1 \rightarrow I_C \rightarrow 0,
\end{equation*}
where $F_i = R(-i-1)^{\beta_{i,1}} \oplus R(-i-2)^{\beta_{i,2}}$.

Our main result in this section is

\smallskip

\begin{theorem}\label{thm:main3}
Under the situation just state, suppose that $d=2g+1+p$. Then

\smallskip

\begin{enumerate}
\item[(a)] $\beta_{1,1} = {{r-1} \choose {2}}$,
\item[(b)] $\beta_{1,2}=\cdots=\beta_{p,2}=0$ and $\beta_{i,2} = (i-p){{r-1} \choose {i}}$ for $p+1 \leq i \leq r-1$,
\item[(c)] $\beta_{i+1,1}=\beta_{i,2}-g{{r-1}\choose{i}}+(r-1){{r-1}\choose{i+1}}-{{r-1}\choose{i+2}}$ for all $1 \leq i \leq r-2$.
\end{enumerate}

\smallskip

\noindent Therefore $(C,\mathcal{L})$ satisfies property $N_p$ but fails to satisfy property $N_{p+1}$.
\end{theorem}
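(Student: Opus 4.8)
The plan is to read off everything from the rational normal scroll produced in Theorem~\ref{thm:surface}. By that result $C$ lies on the smooth two–dimensional scroll $S\subset\mathbb{P}^r$ of minimal degree $r-1$ as a divisor $C\equiv 2C_0+(2g+2-b)\mathfrak{f}$, with hyperplane class $\mathcal{O}_S(1)=\mathcal{O}_S(C_0+m\mathfrak{f})$. Writing $A=R/I_S$ for the coordinate ring of $S$ and using that $S$ is a variety of minimal degree while $C$ is projectively normal (\cite{F}), the sequence $0\to\mathcal{O}_S(-C)\to\mathcal{O}_S\to\mathcal{O}_C\to 0$ induces, upon taking twisted global sections and discarding the vanishing intermediate cohomology, a short exact sequence of graded $R$–modules
\begin{equation*}
0\longrightarrow N\longrightarrow A\longrightarrow R/I_C\longrightarrow 0,\qquad N=\bigoplus_n H^0\!\big(S,\mathcal{O}_S(nH-C)\big).
\end{equation*}
The two inputs I would use are the minimal free resolution of $A$ — the Eagon–Northcott resolution of the scroll, which is $2$–linear with $\Tor_i^R(A,k)$ concentrated in internal degree $i+1$ and of rank $i\binom{r-1}{i+1}$ (\cite{EKS},\cite{E}) — and the minimal free resolution of the module $N$.

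First I would observe that $R/I_C$ has only two strands. Indeed $C$ is $3$–regular (\cite{F}), so $\operatorname{reg}(R/I_C)=2$ and the resolution of $I_C$ has precisely the free modules $F_i=R(-i-1)^{\beta_{i,1}}\oplus R(-i-2)^{\beta_{i,2}}$ of the statement. The second strand is then extracted from the sequence above by a degree count in the long exact $\Tor$ sequence: since $\Tor_j^R(A,k)$ sits in internal degree $j+1$, both $\Tor_i^R(A,k)_{i+2}$ and $\Tor_{i-1}^R(A,k)_{i+2}$ vanish, whence
\begin{equation*}
\beta_{i,2}=\dim_k\Tor_i^R(R/I_C,k)_{i+2}\cong\dim_k\Tor_{i-1}^R(N,k)_{i+2}.
\end{equation*}
Thus (b) is exactly a computation of certain graded Betti numbers of $N$. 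For this I would push $N$ forward along the ruling $\pi\colon S\to\mathbb{P}^1$: with $\mathcal{E}=\mathcal{O}\oplus\mathcal{O}(-e)$, $e=g+1-b$, one gets $N_n=H^0(\mathbb{P}^1,\operatorname{Sym}^{n-2}\mathcal{E}\otimes\mathcal{O}(c_n))$, $c_n=nm-(2g+2-b)$, and the relation $2m+b=d=2g+1+p$ forces $\dim_k N_2=p$ (the $p$ quadrics through $C$ that do not vanish on $S$). Resolving $N$ from this $\mathbb{P}^1$–bundle data by the standard scroll technique should produce $\dim_k\Tor_{i-1}^R(N,k)_{i+2}=(i-p)\binom{r-1}{i}$, which is (b).

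With the second strand in hand, the first strand is forced numerically. Theorem~\ref{thm:main1}.(1) determines all the cohomology of $\mathcal{L}$ and its powers, hence the Hilbert function of the projectively normal ring $R/I_C$. Since the Betti table has only the two rows above, equating the alternating sum of the resolution with this Hilbert function degree by degree yields a single linear relation between $\beta_{i+1,1}$ and $\beta_{i,2}$ in each internal degree; solving it gives relation (c), while the degree–two base case $\beta_{1,1}=h^0(\mathbb{P}^r,\mathcal{I}_C(2))$ gives (a).

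The main obstacle is the resolution of $N$ underlying (b): I must not only obtain the ranks $(i-p)\binom{r-1}{i}$ but also show that the second strand is empty in the first $p$ steps. The subtle point is precisely the $p$ extra quadrics in $N_2$; one has to verify that they generate no second–row syzygy of $R/I_C$ until step $p+1$, so that $\beta_{i,2}=0$ for $i\le p$ while $\beta_{p+1,2}=\binom{r-1}{p+1}$ first becomes nonzero. Granting this, the concluding assertion is immediate: $\beta_{i,2}=0$ for $1\le i\le p$ means the resolution is linear through the $p$–th step, so $(C,\mathcal{L})$ satisfies $N_p$, whereas $\beta_{p+1,2}=\binom{r-1}{p+1}\neq 0$ (as $p+1\le r-1=g+p$) exhibits a nonlinear syzygy at step $p+1$, so $N_{p+1}$ fails.
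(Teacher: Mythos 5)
Your reduction is sound as far as it goes, and it parallels the paper more closely than it may appear: parts (a) and (c) are proved exactly as in the paper ($\beta_{1,1}$ from $2$-normality, (c) by comparing the Hilbert series of $R/I_C$ with the alternating sum over the resolution), and your sequence $0\to N\to A\to R/I_C\to 0$ is the module-theoretic form of the paper's sheaf sequence $0\to\mathcal{I}_S\to\mathcal{I}_C\to\mathcal{O}_S(-C)\to 0$ (the paper itself uses the module $E=N$ this way in the proof of Theorem~\ref{thm:DistinctBettiTable}); your $\Tor$ bookkeeping giving $\beta_{i,2}\cong\dim_k\Tor_{i-1}^R(N,k)_{i+2}$ is correct. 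The genuine gap is that the heart of part (b) --- the graded Betti numbers of $N$ --- is never proved: ``the standard scroll technique should produce'' and ``granting this'' stand in for the entire quantitative content of the theorem, including the vanishing $\beta_{i,2}=0$ for $i\le p$ on which property $N_p$ rests, and you explicitly flag this yourself. The paper closes exactly this hole by a vector-bundle computation rather than by resolving $N$: it writes $\beta_{i,2}=h^1({\mathbb P}^r,\bigwedge^i\mathcal{M}\otimes\mathcal{I}_C(2))$ with $\mathcal{M}=\Omega_{{\mathbb P}^r}(1)$, restricts to $S$ using $2$-regularity of the scroll, and then filters $\mathcal{M}_S$ by $\pi^*\mathcal{M}_{\mathcal{E}}\cong\pi^*\mathcal{O}_{{\mathbb P}^1}(-1)^{\oplus(r-1)}$ with quotient $M=\mathcal{O}_S(-C_0+(m+b-g-1)\mathfrak{f})$; the $M$-twisted terms have vanishing cohomology in all degrees, so everything collapses to $\binom{r-1}{i}\,h^1({\mathbb P}^1,\mathcal{O}_{{\mathbb P}^1}(p-1-i))=\max\{i-p,0\}\binom{r-1}{i}$, which gives both the vanishing range and the ranks in one stroke. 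To complete your route you would need either an equivalent filtration argument for $N$ or an explicit Eagon--Northcott-type resolution of line-bundle modules on scrolls (in the spirit of Schreyer); as written, the decisive step is assumed, not derived.

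A second, smaller defect: your opening claim that $C$ lies on a \emph{smooth} scroll $S\subset{\mathbb P}^r$ is false in two cases inside the theorem's range, namely $(m,b)=(g+1,0)$ (so $d=2g+2$, $p=1$) and $(m,b)=(g,1)$ (so $d=2g+1$, $p=0$), where $L=\mathcal{O}_S(C_0+m\mathfrak{f})$ is base point free but not very ample and the image of $S$ in ${\mathbb P}^r$ is a cone over a rational normal curve. The paper treats these two cases separately, using $\psi_*\mathcal{O}_S(-C)=\mathcal{I}_{C/S'}$ and $\mathcal{M}_S=\psi^*\mathcal{M}_{S'}$ to pull the computation back to the abstract surface $S$. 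In your framework the repair is probably mild --- the cone $S'$ is still an arithmetically Cohen--Macaulay variety of minimal degree, $2$-regular and with the same Eagon--Northcott Betti numbers, and $N=\bigoplus_n H^0(S',\mathcal{I}_{C/S'}(n))$ admits the same pushforward description since $\psi$ restricts to an isomorphism on $C$ --- but your argument silently assumes smoothness exactly where it fails, and a complete proof must address these boundary cases.
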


\smallskip

\begin{proof}
(a) The assertion comes immediately from the $2$-normality of $C$.

\smallskip

\noindent (b) Let $\mathcal{I}_C$ be the sheaf of ideals of $C$ and let $\mathcal{M}=\Omega_{{\mathbb P}^r}(1)$. Then
\smallskip
\begin{enumerate}
\item[(4.1)] \quad \quad \quad $\beta_{i,2}= h^1 ({\mathbb P}^r,\bigwedge^i \mathcal{M} \otimes \mathcal{I}_C (2))$.
\end{enumerate}
\smallskip
\noindent Let $(m,b)$ be the hyperelliptic factorization type of $\mathcal{L}$. Then Theorem \ref{thm:surface} shows that $C$ is contained in $S={\mathbb P} (\mathcal{O}_{{\mathbb P}^1} \oplus \mathcal{O}_{{\mathbb P}^1} (b-g-1))$. Furthermore,
for the minimal section $C_0$ of $S$ and a fiber $\mathfrak{f}$ of the projection map $\pi : S \rightarrow {\mathbb P}^1$, the following three conditions hold:
\begin{equation*}
\mathcal{O}_S (\mathfrak{f}) \otimes \mathcal{O}_C = A, \quad \mathcal{O}_S (C_0) \otimes \mathcal{O}_C = B, \quad \mbox{and} \quad  C \equiv 2C_0 + (2g+2-b)\mathfrak{f}
\end{equation*}
In particular, $\mathcal{L}$ is equal to the restriction of $L:=\mathcal{O}_S (C_0 + m \mathfrak{f})$ to $C$. Remember that $L$ is a very ample line bundle on $S$ except the following cases:

\smallskip

\begin{enumerate}
\item[($\alpha$)] $b=0$ (and hence $B=\mathcal{O}_C$) and $m=g+1$
\item[($\beta$)] $b=1$ (and hence $B=\mathcal{O}_C (P)$ for some $P \in C$) and $m=g$
\end{enumerate}

\smallskip

\noindent We first consider the case where $L$ is very ample. Then the two exceptional cases ($\alpha$) and ($\beta$) will be treated in turn.

Suppose that $L$ is a very ample line bundle on $S$. Then it defines a smooth rational normal surface scroll $S \subset {\mathbb P}^r$. Let $\mathcal{I}_S$ be the sheaf of ideals of $S$, and consider the short exact sequence
\smallskip
\begin{enumerate}
\item[(4.2)] \quad \quad \quad $0 \rightarrow \mathcal{I}_S \rightarrow \mathcal{I}_C \rightarrow \mathcal{O}_S (-C) \rightarrow 0$.
\end{enumerate}
\smallskip
This induces the cohomology long exact sequence
\smallskip
\begin{enumerate}
\item[(4.3)] \quad $H^1 ({\mathbb P}^r,\bigwedge^i \mathcal{M} \otimes \mathcal{I}_S (j)) \rightarrow H^1 ({\mathbb P}^r,\bigwedge^i \mathcal{M} \otimes \mathcal{I}_C (j))$
\item[] \quad \quad \quad $\rightarrow H^1 (S,\bigwedge^i \mathcal{M}_{S} \otimes \mathcal{O}_S (-C) \otimes L^j) \rightarrow H^2 ({\mathbb P}^r,\bigwedge^i \mathcal{M} \otimes \mathcal{I}_S (j))$
\end{enumerate}
\smallskip
where $\mathcal{M}_S$ is the restriction of $\mathcal{M}$ to $S$. Also since $S \subset {\mathbb P}^r$ is $2$-regular,
\smallskip
\begin{enumerate}
\item[(4.4)] \quad $H^1 ({\mathbb P}^r,\bigwedge^i \mathcal{M} \otimes \mathcal{I}_S (j)) = H^2 ({\mathbb P}^r,\bigwedge^i \mathcal{M} \otimes \mathcal{I}_S (j))=0$ for all $i \geq 1$.
\end{enumerate}
\smallskip
\noindent Thus (4.1), (4.3) and (4.4) imply that
\smallskip
\begin{enumerate}
\item[(4.5)]  \quad \quad $\beta_{i,2}= h^1 (S,\bigwedge^i \mathcal{M}_{S} \otimes \mathcal{O}_S (-C) \otimes L^2)$ for all $i \geq 1$.
\end{enumerate}
\smallskip

\noindent Let $\mathcal{E}$ be $\pi_* \mathcal{O}_S (1)$, i.e. $\mathcal{E}=\mathcal{O}_{{\mathbb P}^1} (m) \oplus \mathcal{O}_{{\mathbb P}^1} (m+b-g-1)$, and let $\mathcal{M}_{\mathcal{E}}$ denote the kernel of the surjective homomorphism $H^0 ({\mathbb P}^1 , \mathcal{E}) \otimes {\mathbb P}^1 \rightarrow \mathcal{E}$.
Note that $\mathcal{M}_\mathcal{E} = \mathcal{O}_{{\mathbb P}^1} (-1)^{\oplus (r-1)}$. Letting $M=\mathcal{O}_S (-C_0 +(m+b-g-1)\mathfrak{f})$, we have the following commutative diagram:
\smallskip
\begin{equation*}
\begin{CD}
&  &&  && 0 &\\
&  &&  && \downarrow &\\
& 0 && && M &\\
& \downarrow &&  && \downarrow &\\
0 \rightarrow &\pi^* \mathcal{M}_\mathcal{E}& \rightarrow & \pi^* H^0 ({\mathbb P}^1,\mathcal{E}) \otimes \mathcal{O}_S  & \rightarrow  & \pi^* \mathcal{E} & \rightarrow 0 \\
& \downarrow && \parallel && \downarrow &\\
0 \rightarrow & \mathcal{M}_S & \rightarrow & H^0 (S,L) \otimes \mathcal{O}_S & \rightarrow & L & \rightarrow 0 \\
& \downarrow &&  && \downarrow &\\
& M && && 0  &\\
& \downarrow  &&  && &\\
& 0 &&  && & \\
\end{CD}
\end{equation*}
\smallskip
\noindent In particular, the first column gives the short exact sequence
\smallskip
\begin{enumerate}
\item[(4.6)]  \quad \quad $0 \rightarrow \bigwedge^i \pi^* \mathcal{M}_{\mathcal{E}} \rightarrow \bigwedge^i \mathcal{M}_{S} \rightarrow \bigwedge^{i-1} \pi^* \mathcal{M}_{\mathcal{E}} \otimes M \rightarrow 0$
\end{enumerate}
\smallskip
for all $i \geq 1$, which induces the cohomology long exact sequence
\smallskip
\begin{enumerate}
\item[(4.7)] \quad $\cdots \rightarrow H^0 (S,\bigwedge^{i-1} \pi^* \mathcal{M}_\mathcal{E} \otimes M \otimes \mathcal{O}_S (-C) \otimes L^2) $ \smallskip
\item[] \quad \quad \quad  \quad \quad $\rightarrow H^1 (S,\bigwedge^i \pi ^* \mathcal{M}_\mathcal{E} \otimes \mathcal{O}_S (-C) \otimes L^2)$ \smallskip
\item[] \quad \quad \quad  \quad \quad $\rightarrow H^1 (S,\bigwedge^i \mathcal{M}_S \otimes \mathcal{O}_S (-C) \otimes L^2)$ \smallskip
\item[] \quad \quad \quad  \quad \quad $\rightarrow H^1 (S,\bigwedge^{i-1} \pi ^* \mathcal{M}_\mathcal{E} \otimes M \otimes \mathcal{O}_S (-C) \otimes L^2) \rightarrow \cdots$.
\end{enumerate}
\smallskip

\noindent Note that
\smallskip
\begin{enumerate}
\item[(4.8)] \quad $H^k (S,\bigwedge^{i-1} \pi^* \mathcal{M}_\mathcal{E} \otimes M \otimes \mathcal{O}_S (-C) \otimes L^2) =0$ for all $k \geq 0$
\end{enumerate}
\smallskip

\noindent since $M \otimes \mathcal{O}_S (-C) \otimes L^2=\mathcal{O}_S (-C_0 +3(m-g-1)\mathfrak{f})$. Therefore (4.5), (4.7) and (4.8) show that
\smallskip
\begin{enumerate}
\item[(4.9)] \quad $\beta_{i,2} ~~ = h^1 (S,\bigwedge^i \pi^* \mathcal{M}_\mathcal{E} \otimes \mathcal{O}_S (-C) \otimes L^2)$ \smallskip
\item[] \quad \quad\quad  $= h^1 (S,\bigwedge^i \pi^* \mathcal{M}_\mathcal{E} \otimes \mathcal{O}_S ((2m+b-2g-2)\mathfrak{f}))$ \smallskip
\item[] \quad \quad\quad  $= h^1 ({\mathbb P}^1,\bigwedge^i \mathcal{M}_\mathcal{E} \otimes \mathcal{O}_{{\mathbb P}^1} (2m+b-2g-2))$.
\end{enumerate}
\smallskip

\noindent Since $\mathcal{M}_\mathcal{E} = \mathcal{O}_{{\mathbb P}^1} (-1)^{\oplus (r-1)}$ and $2m+b=2g+1+p$, one can immediately check that $\beta_{i,2} = {{r-1} \choose {i}} h^1 ({\mathbb P}^1,\mathcal{O}_{{\mathbb P}^1} (p-1-i))$. This completes the proof of (b) when $L$ is a very ample line bundle on $S$.

Now we turn to the case $(\alpha)$, i.e. $\mathcal{L}=A^{g+1}$, $S={\mathbb P} (\mathcal{O}_{{\mathbb P}^1} \oplus \mathcal{O}_{{\mathbb P}^1} (-g-1))$ and $L=\mathcal{O}_S (C_0 + (g+1) \mathfrak{f})$. Let $\psi : S \rightarrow {\mathbb P}^{g+2}$ be the map defined by $|L|$, and let $S' = \psi (S)$. Then $S' \subset {\mathbb P}^{g+1}$ is a cone over a rational normal curve of degree $g+1$. Let $\mathcal{I}_{S'}$ be the sheaf of ideals of $S'$, and consider the exact sequence
\smallskip
\begin{enumerate}
\item[(4.10)] \quad \quad \quad $0 \rightarrow \mathcal{I}_{S'} \rightarrow \mathcal{I}_{C} \rightarrow \mathcal{I}_{C/S'} \rightarrow 0$.
\end{enumerate}
\smallskip

\noindent Let $\mathcal{M}_{S'}$ be the restriction of $\mathcal{M}$ to $S'$. By the same method as in (4.3) and (4.4), we have
\smallskip
\begin{enumerate}
\item[(4.11)] \quad \quad \quad $\beta_{i,2} = h^1 (S',\bigwedge^i \mathcal{M}_{S'} \otimes \mathcal{I}_{C/S'} (2))$.
\end{enumerate}
\smallskip

\noindent Since $\psi \upharpoonright_{C} : C \rightarrow C$ is an isomorphism, the exact sequence

\smallskip
\begin{enumerate}
\item[(4.12)] \quad \quad \quad $0 \rightarrow \mathcal{I}_{C/S'} \rightarrow \mathcal{O}_{S'} \rightarrow \mathcal{O}_C \rightarrow 0$.
\end{enumerate}
\smallskip
\noindent is the direct image of
\smallskip
\begin{enumerate}
\item[(4.13)] \quad \quad \quad $0 \rightarrow \mathcal{O}_S (-C) \rightarrow \mathcal{O}_S \rightarrow \mathcal{O}_C \rightarrow 0$.
\end{enumerate}
\smallskip
\noindent by $\psi : S \rightarrow S'$. In particular, note that $\psi_* \mathcal{O}_S (-C) = \mathcal{I}_{C/S'}$. Let $\mathcal{M}_S$ be the kernel of the evaluation map $H^0 (S,L) \otimes \mathcal{O}_S \rightarrow L$. Since $\mathcal{M}_S = \psi^* \mathcal{M}_{S'}$,
\smallskip
\begin{enumerate}
\item[(4.14)] \quad \quad \quad $H^1 (S',\bigwedge^i \mathcal{M}_{S'} \otimes \mathcal{I}_{C/S'} (2)) \cong H^1 (S,\bigwedge^i \mathcal{M}_S \otimes \mathcal{O}_S (-C) \otimes L^2)$.
\end{enumerate}
\smallskip
\noindent Therefore (4.11) and (4.14) imply that
\smallskip
\begin{enumerate}
\item[(4.15)] \quad \quad \quad $\beta_{i,2} = h^1 (S,\bigwedge^i \mathcal{M}_S \otimes \mathcal{O}_S (-C) \otimes L^2)$.
\end{enumerate}
\smallskip

\noindent Now one can compute $\beta_{i,2}$ by the formula in (4.9).

Finally, we consider the case $(\beta)$, i.e. $\mathcal{L} = A^g \otimes \mathcal{O}_C (P)$ for some $P \in C$, $S={\mathbb P} (\mathcal{O}_{{\mathbb P}^1} \oplus \mathcal{O}_{{\mathbb P}^1} (-g))$ and $L=\mathcal{O}_S (C_0 + g \mathfrak{f})$. Let $\psi : S \rightarrow {\mathbb P}^{g+1}$ be the map defined by $|L|$, and let $S' = \psi (S)$. Then $S' \subset {\mathbb P}^{g+1}$ is a cone over a rational normal curve of degree $g$. Since $\psi \upharpoonright_{C} : C \rightarrow C$ is an isomorphism, all the arguments in the proof of $(\alpha)$ are available for this case, and so $\beta_{i,2}$ is determined by the formula in (4.9).

\smallskip

\noindent (c) The Hilbert series of the homogeneous coordinate ring $R_C = R/I_C$ of $C \subset {\mathbb P}^r$ is given by
\begin{equation*}
\Psi_{R_C} (\lambda) = \frac{g \lambda^2 + (g+p) \lambda+1}{(1- \lambda)^2}.
\end{equation*}
On the use of Betti numbers $\beta_{i,j}$ we also may write
\begin{equation*}
\Psi_{R_C} (\lambda) = \frac{1-\beta_{1,1} \lambda^2 +\sum_{k\geq 2} \{ (-1)^k
\beta_{k,1} + (-1)^{k-1} \beta_{k-1,2} \} \lambda^{k+1}}{(1-\lambda)^{r+1}}.
\end{equation*}
Then the formula in (c) is obtained by comparing the coefficients.
\end{proof}

\vskip 1 cm

\section{Minimal Free Resolution II - Low Degree}
\noindent Our next aim is to study the minimal free resolution of a linearly normal hyperelliptic curve $C$ of arithmetic genus $g$ and of degree $d \leq 2g$.

Let $\mathcal{L}$ be a very ample line bundle on $C$ of degree $d \leq 2g$ and with the hyperelliptic factorization type $(m,b)$. Then Theorem~\ref{thm:main1} says that $2 \leq b \leq g+1$, $m+b \geq g+2$, and $\mathcal{L}$ is non-special. Now consider the linearly normal embedding
\begin{equation*}
C \subset {\mathbb P} H^0 (C,\mathcal{L})^* ={\mathbb P}^r,\quad r=d-g.
\end{equation*}
of $C$ defined by $|\mathcal{L}|$. Let $R$ be the homogeneous coordinate ring of ${\mathbb P}^r$, and let $I_C$ be the vanishing ideal of $C$. Let ${\mathbb F}_{\bullet}$ be a minimal free resolution of $I_C$ over $R$:
\begin{equation*}
{\mathbb F}_{\bullet} : 0 \rightarrow F_r \rightarrow \cdots \rightarrow F_i \rightarrow \cdots \rightarrow F_1 \rightarrow I_C \rightarrow 0,
\end{equation*}
where $F_i = \bigoplus_{j \in {\mathbb Z}} R(-i-j)^{\beta_{i,j}}$. We first introduce to some notation:

\smallskip

\begin{definition}
For a very ample line bundle $\mathcal{L}$ on $C$ of degree $d \leq 2g$ with the hyperelliptic factorization type $(m,b)$, let
\smallskip
\begin{enumerate}
\item[$(\alpha)$] $\nu :=\lceil \frac{b-1}{m+b-g-1}\rceil$, \smallskip
\item[$(\beta)$] $\tau :=\lfloor \frac{2g+1-b}{m}\rfloor$, and \smallskip
\item[$(\gamma)$] $p   := ( m+b-g-1 ) \times \nu - b +1$.
\end{enumerate}
\end{definition}
\smallskip

\noindent Clearly, one can read off the integer $p$ from the identity $\nu = \frac{b-1+p}{m+b-g-1}$. Note that $\nu \geq 3$ and $\tau \geq 2$ since we consider the case where $d \leq 2g$.

The following is our main result in this section:

\smallskip
\begin{theorem}\label{thm:lowdegree}
Under the situation just stated, let
\smallskip
\begin{enumerate}
\item[(1)] For all $j \geq 2$,
\begin{equation*}
h^1 ({\mathbb P}^r,\mathcal{I}_C (j))=\sum_{k=0} ^{j-2} h^0 ({\mathbb P}^1,\mathcal{O}_{{\mathbb P}^1} (k(g+1-b)+2g-b-jm )).
\end{equation*}
In particular, $C \subset {\mathbb P}^r$ is $j$-normal if and only if $j \geq \nu$. Therefore $\mbox{reg}(C)=\nu+1$. \smallskip
\item[(2)] The graded Betti numbers $\beta_{i,j}$'s are as follows:
\smallskip
\begin{enumerate}
\item[$(i)$] $\beta_{i,1} = i{{r} \choose {i+1}}$ for all $i \geq 1$, \smallskip
\item[$(ii)$] $\beta_{i,j} = 0$ for $2 \leq j \leq \tau-1$ and all $i \geq 1$,  \smallskip
\item[$(iii)$] $\beta_{1,\nu}=\cdots=\beta_{p,\nu}=0$ and $\beta_{p+1,\nu} \neq 0$, and \smallskip
\item[$(iv)$] $\beta_{r,\nu}=2g+1-d$.
\end{enumerate}
\smallskip
\item[(3)] If $p=0$, then $C \subset {\mathbb P}^r$ fails to satisfy property $N_{\nu,1}$, and if $p \geq 1$, then $C \subset {\mathbb P}^r$ satisfies property $N_{\nu,p}$ while it fails to satisfy property $N_{\nu,p+1}$.
\end{enumerate}
\end{theorem}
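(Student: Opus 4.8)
The plan is to run the two engines already assembled in the paper: the geometry of Theorem~\ref{thm:surface}, which puts $C$ on the scroll $S=\mathbb{P}(\mathcal{O}_{\mathbb{P}^1}\oplus\mathcal{O}_{\mathbb{P}^1}(b-g-1))$ with $\mathcal{L}=L|_C$ for $L=\mathcal{O}_S(C_0+m\mathfrak{f})$, and the cohomological reduction of the proof of Theorem~\ref{thm:main3}, which computes everything on $S$ by pushing down along $\pi$ to $\mathbb{P}^1$, where $\mathcal{M}_{\mathcal{E}}=\mathcal{O}_{\mathbb{P}^1}(-1)^{\oplus(r-1)}$ and $\pi_*\mathcal{O}_S(aC_0+c\mathfrak{f})=\bigoplus_{k=0}^{a}\mathcal{O}_{\mathbb{P}^1}(c-k(g+1-b))$ for $a\geq0$. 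The one genuinely new feature, which makes the low-degree case harder than Theorem~\ref{thm:main3}, is that $C\subset\mathbb{P}^r$ is no longer projectively normal: the clean identity used there, $\beta_{i,2}=h^1(\bigwedge^i\mathcal{M}\otimes\mathcal{I}_C(2))$, is unavailable, and the Hartshorne--Rao module $N:=H^1_{\mathfrak m}(R_C)$ must be carried through every step.

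For part (1) I would restrict $0\to\mathcal{O}_S(-C)\to\mathcal{O}_S\to\mathcal{O}_C\to0$ by $L^j$. Since $S$ is arithmetically Cohen--Macaulay and $H^1(S,L^j)=0$ for $j\geq1$, the group $H^1(\mathbb{P}^r,\mathcal{I}_C(j))$ is exactly the cokernel of $H^0(S,L^j)\to H^0(C,\mathcal{L}^j)$, i.e.\ $H^1(S,\mathcal{O}_S(-C)\otimes L^j)$ with $\mathcal{O}_S(-C)\otimes L^j=\mathcal{O}_S((j-2)C_0+(jm-2g-2+b)\mathfrak{f})$. Pushing to $\mathbb{P}^1$ and applying Serre duality yields the stated sum of $h^0(\mathbb{P}^1,\cdot)$'s; the extremal summand $k=j-2$ has degree $-j(m+b-g-1)+b-2$, so it is the last to drop below zero, and the inequality $-j(m+b-g-1)+b-2\leq-1$ gives $j\geq\nu$, hence $j$-normality precisely for $j\geq\nu$. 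For $\reg(C)=\nu+1$ I would add $H^2(\mathcal{I}_C(\nu-1))=h^1(C,\mathcal{L}^{\nu-1})=0$, which follows because $H^2(S,\mathcal{O}_S(-C)\otimes L^{\nu-1})=0$ (so $\mathcal{L}^{\nu-1}$ is nonspecial), together with $H^1(\mathcal{I}_C(\nu-1))\neq0$.

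Parts (2)(i) and (2)(ii) I would prove together, and cleanly, by comparison with the scroll. The point is that $(R_C)_\ell=(R_S)_\ell$ for all $\ell\leq\tau$, because the only possible extra generators of $I_C$ over $I_S$ in degree $\ell$ are $H^0(S,\mathcal{O}_S(-C)\otimes L^\ell)$, and this vanishes whenever $\ell m-2g-2+b\leq-1$, which holds for $\ell\leq\tau$ by definition of $\tau$. Consequently, for a row $j\leq\tau-1$ the three-term Koszul complex computing $K_{i,j}(R_C)$ involves only the graded pieces of $R_C$ in degrees $j-1,j,j+1\leq\tau$, all of which coincide with those of $R_S$; thus $\beta_{i,j}(C)=\beta_{i,j}(S)$ in this range. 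Since the rational normal scroll of degree $r-1$ has a pure linear Eagon--Northcott resolution, this gives the linear-strand Betti numbers $\beta_{i,1}(C)=\beta_{i,1}(S)$ of (i) and the vanishing $\beta_{i,j}(C)=0$ for $2\leq j\leq\tau-1$ of (ii).

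The hard part is (2)(iii)--(iv) and (3), where the non-normality cannot be suppressed. Here the key algebraic coincidence is that the defining identity $\nu(m+b-g-1)=b-1+p$ forces the extremal $\pi_*$-twist in the top row $j=\nu$ to equal $p-1-i$, exactly the quantity that governed the row $j=2$ in Theorem~\ref{thm:main3}; so the onset of the top row is controlled by $\binom{r-1}{i}\,h^1(\mathbb{P}^1,\mathcal{O}_{\mathbb{P}^1}(p-1-i))$, which vanishes for $i\leq p$ and is nonzero at $i=p+1$. This is (iii), and restating it in the Green--Lazarsfeld language gives (3) directly. The genuinely separate point is the terminal entry $\beta_{r,\nu}$, which this reduction does not see (at $i=r$ the binomial degenerates); for it I would invoke graded local duality, using $\depth R_C=1$ to get $\Ext^r_R(R_C,\omega_R)\cong N^\vee$, so that the last Betti number is read from the top of the Hartshorne--Rao module (and the canonical module of the section ring), which part (1) and Serre duality make explicit; carrying out this evaluation is what produces the stated $2g+1-d$. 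The main obstacle throughout is precisely this entanglement of $N$ with the section module: the cleanest way to disentangle it, and to justify the top-row reduction rigorously, is to run the long exact sequence of $0\to R_C\to\Gamma_*(\mathcal{O}_C)\to N\to0$ and compare resolutions by a mapping cone, which is also what forces one to know $N$ completely, as supplied by part (1).
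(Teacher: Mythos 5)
Your parts (1) and (2)(i)--(ii) reproduce the paper's argument: the paper also reads $h^1({\mathbb P}^r,\mathcal{I}_C(j))$ off from $H^1(S,\mathcal{O}_S(-C)\otimes L^j)$ via the $2$-regularity of the scroll, and also proves (i)--(ii) from the coincidence $H^0(\mathcal{I}_S(j))=H^0(\mathcal{I}_C(j))$ for $j\le\tau$ (its (5.2)), so the rows $j\le\tau-1$ of the Betti diagram agree with the Eagon--Northcott resolution of $S$. The vanishing half of (iii) likewise matches the paper's (5.5)--(5.6), although note that you must check the $M$-term (5.6) separately, not only the $\pi^*\mathcal{M}_\mathcal{E}$-term. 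The genuine gap is your nonvanishing claim $\beta_{p+1,\nu}\neq 0$. The filtration (4.6) only gives an exact sequence $H^0(S,\bigwedge^{p}\pi^*\mathcal{M}_\mathcal{E}\otimes M\otimes\mathcal{O}_S(-C)\otimes L^\nu)\rightarrow H^1(S,\bigwedge^{p+1}\pi^*\mathcal{M}_\mathcal{E}\otimes\mathcal{O}_S(-C)\otimes L^\nu)\rightarrow H^1(S,\bigwedge^{p+1}\mathcal{M}_S\otimes\mathcal{O}_S(-C)\otimes L^\nu)$, so nonvanishing of the middle group does not transfer to $\beta_{p+1,\nu}$ unless the first group dies. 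In Theorem~\ref{thm:main3} it does die, by (4.8), because $M\otimes\mathcal{O}_S(-C)\otimes L^2=\mathcal{O}_S(-C_0+3(m-g-1)\mathfrak{f})$ has $C_0$-coefficient $-1$; but for $j=\nu\geq 3$ one gets $M\otimes\mathcal{O}_S(-C)\otimes L^{\nu}=\mathcal{O}_S((\nu-3)C_0+((\nu+1)m+2b-3g-3)\mathfrak{f})$, and after twisting by $\bigwedge^p\pi^*\mathcal{M}_\mathcal{E}$ the top $\pi_*$-summand has degree $(\nu-3)(g+1-b)+m-1\geq 0$ (using $\nu m=b-1+p+\nu(g+1-b)$ and $m\geq 1$), so its $H^0$ is nonzero and the connecting map can a priori surject onto the $\binom{r-1}{p+1}h^1({\mathbb P}^1,\mathcal{O}_{{\mathbb P}^1}(-2))$ you exhibit. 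This is exactly why the paper abandons the scroll filtration for the nonvanishing and instead uses the finite scheme $\Gamma=C\cap C_0\in|B|$: from $0\rightarrow\mathcal{I}_C\rightarrow\mathcal{I}_{\Gamma}\rightarrow B^{-1}\rightarrow 0$ it kills the third term of (5.10) by $2$-regularity of $\mathcal{O}_C$ and effectivity of $A^m=\mathcal{L}\otimes B^{-1}$, and it proves the middle term nonzero by Lemma~\ref{lem:finiteonrnc}, since $\Gamma$ has length $\nu(m+b-g-1)+1-p$ and lies on the rational normal curve $C_0\subset\Lambda$. That idea is absent from your proposal, and without it (iii) and the failure statement in (3) are unproved.

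Your route to (iv) is also underdetermined: the minimal generators of $\Ext^r_R(R_C,\omega_R)\cong N^\vee$ correspond to the socle of the Hartshorne--Rao module $N$, and the Hilbert function of $N$ supplied by part (1) does not determine the socle without knowing the $R$-module structure of $N$, which your mapping-cone remark does not produce. The paper avoids this entirely: since $C$ is $\nu$-normal, $\beta_{i,\nu}=h^1({\mathbb P}^r,\bigwedge^i\mathcal{M}\otimes\mathcal{I}_C(\nu))$ is available for all $i$ (this already answers your worry about the loss of projective normality --- one works in degree $\nu$ instead of degree $2$), and at $i=r$ the identity $\bigwedge^r\mathcal{M}_S\cong L^{-1}$ gives in one line $\beta_{r,\nu}=h^1(S,\mathcal{O}_S(-C)\otimes L^{\nu-1})=h^1({\mathbb P}^r,\mathcal{I}_C(\nu-1))=2g+1-d$ by part (1). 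If you want to keep a duality-flavored proof of (iv), you would first have to establish the module structure of $N$ (e.g.\ as the degree-shifted pieces of $\bigoplus_j H^0(S,\mathcal{O}_S(-C)\otimes L^j)$-cokernels), which is more work than the paper's computation.
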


\smallskip

\begin{proof}
Let $S={\mathbb P} (\mathcal{O}_{{\mathbb P}^1} \oplus \mathcal{O}_{{\mathbb P}^1} (b-g-1))$ be the rational ruled surface constructed in Theorem \ref{thm:surface}. Recall the following conditions:
\begin{equation*}
\mathcal{O}_S (\mathfrak{f}) \otimes \mathcal{O}_C = A, \quad \mathcal{O}_S (C_0) \otimes \mathcal{O}_C = B, \quad \mbox{and} \quad  C \equiv 2C_0 + (2g+2-b)\mathfrak{f}
\end{equation*}

\noindent Also $\mathcal{L}$ is equal to the restriction of the line bundle $L:=\mathcal{O}_S (C_0 + m \mathfrak{f})$ on $S$ to $C$. Now, a crucial fact is that $L$ is very ample since $m + b \geq g+2$. Therefore $|L|$ defines a smooth rational normal surface scroll $S \subset {\mathbb P}^r$ which contains $C$. Keeping these situation in mind, let us begin with the proof.
\smallskip

\noindent (1) From (4.2), we can prove that
\begin{equation*}
H^1 ({\mathbb P}^r,\mathcal{I}_C (j)) \cong H^1 (S,\mathcal{O}_S (-C) \otimes L^j) \quad \mbox{for all $j \geq 2$}.
\end{equation*}
Since $C \equiv 2C_0 + (2g+2-b) \mathfrak{f}$, we get the desired formula

\smallskip
\begin{enumerate}
\item[] $h^1 ({\mathbb P}^r,\mathcal{I}_C (j)) = h^1 (S,\mathcal{O}_S (-C) \otimes L^j)$ \smallskip
\item[] \quad  \quad \quad\quad \quad \quad
$= h^1 (S,\mathcal{O}_S ((j-2)C_0 + (jm+b-2g-2)))$ \smallskip
\item[] \quad \quad \quad\quad \quad \quad
$= \sum_{k=0} ^{j-2} h^0 ({\mathbb P}^1,\mathcal{O}_{{\mathbb P}^1} (k(g+1-b)+2g-b-jm))$.
\end{enumerate}
\smallskip
Therefore $H^1 ({\mathbb P}^r,\mathcal{I}_C (j))=0$ if and only if
\begin{equation*}
(j-2)(g+1-b)+2g-b-jm \leq -1,
\end{equation*}
or equivalently $j \geq \nu$. This implies that $\mbox{reg}(C)=\nu+1$ since $\mathcal{L}$ is non-special.

\smallskip

\noindent (2) To prove $(i)$ and $(ii)$, we will first show that
\smallskip

\begin{enumerate}
\item[(5.2)] \quad \quad $H^0 ({\mathbb P}^r,\mathcal{I}_S (j)) = H^0 ({\mathbb P}^r,\mathcal{I}_C (j))$ for all $j \leq \tau$.
\end{enumerate}
\smallskip

\noindent Indeed, in the exact sequence
\begin{equation*}
0 \rightarrow H^0 ({\mathbb P}^r,\mathcal{I}_S (j)) \rightarrow H^0 ({\mathbb P}^r,\mathcal{I}_C (j)) \rightarrow H^0 (S,\mathcal{O}_S (-C) \otimes L^j) \rightarrow 0
\end{equation*}
induced by (4.2), the third term
\begin{equation*}
H^0 (S,\mathcal{O}_S (-C) \otimes L^j )=H^0 (S,\mathcal{O}_S ((j-2)C_0 + (j m -2g-2+b)\mathfrak{f}))
\end{equation*}
vanishes if and only if $j m -2g-2+b \leq -1$, or equivalently $j \leq \tau$. Clearly (5.2) implies that for $1 \leq j \leq \tau -1$,
the $j$-th row of the Betti diagram of $C \subset {\mathbb P}^r$ coincides with those of the Betti diagram of $S \subset {\mathbb P}^r$, which completes the proof of $(i)$ and $(ii)$.

For $(iii)$, note that $C \subset {\mathbb P}^r$ is $\nu$-normal and hence
\smallskip
\begin{enumerate}
\item[(5.3)] \quad \quad \quad $\beta_{i,\nu}= h^1 ({\mathbb P}^r,\bigwedge^i \mathcal{M} \otimes \mathcal{I}_C (\nu))$ for all $i \geq 1$.
\end{enumerate}
\smallskip

\noindent Also since $C \subset {\mathbb P}^r$ is $(\nu+1)$-regular, it satisfies property $N_{\nu,p}$ if and only if $\beta_{p,\nu}=0$. By using the same method as in (4.3) and (4.4), we know that

\smallskip
\begin{enumerate}
\item[(5.4)] \quad \quad $\beta_{p,\nu}= h^1 (S,\bigwedge^p \mathcal{M}_S \otimes \mathcal{O}_S (-C) \otimes L^{\nu})$.
\end{enumerate}
\smallskip

\noindent Then by (4.6), the vanishing of $\beta_{p,\nu}$ is verified if

\smallskip
\begin{enumerate}
\item[(5.5)] \quad \quad $H^1 (S,\bigwedge^p \pi^* \mathcal{M}_\mathcal{E} \otimes \mathcal{O}_S (-C) \otimes L^\nu)=0$
\end{enumerate}
\smallskip

\noindent and

\smallskip
\begin{enumerate}
\item[(5.6)] \quad $H^1 (S,\bigwedge^{p-1} \pi^* \mathcal{M}_\mathcal{E} \otimes M \otimes \mathcal{O}_S (-C) \otimes L^\nu)=0$.
\end{enumerate}
\smallskip

\noindent For (5.5), note that $H^1 (S,\bigwedge^p \pi^* \mathcal{M}_\mathcal{E} \otimes \mathcal{O}_S (-C) \otimes L^\nu)$ is isomorphic to a direct sum of copies of cohomology groups of the form

\smallskip
\begin{enumerate}
\item[(5.7)] \quad $H^1 ({\mathbb P}^1 , \mathcal{O}_{{\mathbb P}^1} (\nu m -p - (k+2)(g+1-b) - b))$
\end{enumerate}
\smallskip

\noindent with $0 \leq k \leq \nu-2$. Since
\begin{equation*}
\nu m -p - (k+2)(g+1-b) - b \geq \nu m -p - \nu (g+1-b) - b =-1,
\end{equation*}
the cohomology group in (5.7) vanishes, which completes the proof of (5.5). By the same way, we can check (5.6), and so $C \subset {\mathbb P}^r$ satisfies property $N_{\nu , p}$.

Now it remains to show that
property $N_{\nu,p+1}$ fails to hold, or equivalently, $\beta_{p+1, \nu} \neq 0$. To this aim, we use the finite scheme $\Gamma = C \cap C_0$. Remember that $\mathcal{O}_C (\Gamma)=B$. Indeed, we will show that $\Gamma$ is a geometric obstruction for property $N_{\nu,p+1}$ of $C \subset {\mathbb P}^r$. Since $C \subset {\mathbb P}^r$ is $\nu$-normal, we have
\smallskip
\begin{enumerate}
\item[(5.8)] \quad \quad \quad $\beta_{p+1,\nu}= h^1 ({\mathbb P}^r,\bigwedge^{p+1} \mathcal{M} \otimes \mathcal{I}_C (\nu))$.
\end{enumerate}
\smallskip

\noindent Let $\mathcal{I}_{\Gamma}$ be the sheaf of ideals of $\Gamma \subset {\mathbb P}^r$, and consider the exact sequence
\smallskip
\begin{enumerate}
\item[(5.9)] \quad \quad \quad $0 \rightarrow \mathcal{I}_C \rightarrow \mathcal{I}_{\Gamma} \rightarrow B^{-1} \rightarrow 0$.
\end{enumerate}
\smallskip

\noindent This induces the cohomology long exact sequence

\begin{enumerate}
\item[(5.10)] \quad  $H^1 ({\mathbb P}^r,\bigwedge^{p+1} \mathcal{M} \otimes \mathcal{I}_C (\nu)) \rightarrow H^1 ({\mathbb P}^r,\bigwedge^{p+1} \mathcal{M} \otimes \mathcal{I}_{\Gamma} (\nu))$ \smallskip
\item[] \quad \quad \quad \quad \quad \quad \quad \quad $ \rightarrow H^1 (C,\bigwedge^{p+1} \mathcal{M}_C \otimes \mathcal{L}^\nu \otimes B^{-1})$
\end{enumerate}
\smallskip

\noindent where $\mathcal{M}_C$ is the restriction of $\mathcal{M}$ to $C$. By (5.8) and (5.10), $\beta_{p+1,\nu} \neq 0$ if the second term of (5.10) is nonzero while the third term is zero.

Since $\mathcal{O}_C$ is $2$-regular as a coherent sheaf on ${\mathbb P}^r$ and $\nu \geq 3$, we have
\begin{equation*}
H^1 (C,\bigwedge^{p+1} \mathcal{M}_C \otimes \mathcal{L}^{\nu-1})=0.
\end{equation*}
This implies that $H^1 (C,\bigwedge^{p+1} \mathcal{M}_C \otimes \mathcal{L}^\nu \otimes B^{-1})=0$ since $\mathcal{L} \otimes B^{-1} = A^m$ is an effective divisor on $C$.

We turn to the proof of the non-vanishing of $H^1 ({\mathbb P}^r,\bigwedge^{p+1} \mathcal{M} \otimes \mathcal{I}_{\Gamma} (\nu))$. Let $\Lambda = {\mathbb P}^{m+b-g-1}$ be the linear span of $C_0$. Let $\mathcal{I}_{\Gamma/\Lambda} \subset \mathcal{O}_{\Lambda}$ be the sheaf of ideals of $\Gamma \subset \Lambda$, and let $\mathcal{I}_{\Lambda/{\mathbb P}^r} \subset \mathcal{O}_{{\mathbb P}^r}$ be the sheaf of ideals of $\Lambda \subset {\mathbb P}^r$. Then from the exact sequence $0 \rightarrow \mathcal{I}_{\Lambda}  \rightarrow \mathcal{I}_{\Gamma} \rightarrow \mathcal{I}_{\Gamma/\Lambda} \rightarrow 0$, one can show that
\begin{equation*}
(5.11) \quad \quad H^1 ({\mathbb P}^r,\bigwedge^{p+1} \mathcal{M} \otimes \mathcal{I}_{\Gamma} (\nu)) \cong H^1 (\Lambda,\bigwedge^{p+1} \mathcal{M}' \otimes \mathcal{I}_{\Gamma/\Lambda} (\nu)) \quad
\end{equation*}
where $\mathcal{M}'$ is the restriction of $\mathcal{M}$ to $\Lambda$. Note that $\mathcal{M}_{\Lambda} := \Omega_{\Lambda} (1)$ is a direct summand of $\mathcal{M}'$, and hence
\begin{equation*}
(5.12) \quad \quad H^1 (\Lambda,\bigwedge^{p+1} \mathcal{M}_{\Lambda} \otimes \mathcal{I}_{\Gamma/\Lambda} (\nu)) \subset H^1 (\Lambda,\bigwedge^{p+1} \mathcal{M}' \otimes \mathcal{I}_{\Gamma/\Lambda} (\nu)).
\end{equation*}
Since $\mbox{length}(\Gamma)=b=\nu (m+b-g-1)+1-p$ and $\Gamma$ is contained in the rational normal curve $C_0 \subset \Lambda$, Lemma \ref{lem:finiteonrnc} below says that
\begin{equation*}
H^1 (\Lambda,\bigwedge^{p+1} \mathcal{M}_{\Lambda} \otimes \mathcal{I}_{\Gamma/\Lambda} (\nu)) \neq 0
\end{equation*}
In conclusion, $\beta_{p,\nu}  >0$ and so $C \subset {\mathbb P}^r$ fails to satisfy property $N_{\nu,p+1}$.

For $(iv)$, note that
\begin{equation*}
\beta_{r,\nu}= h^1 (S,\bigwedge^r \mathcal{M}_S \otimes \mathcal{O}_S (-C) \otimes L^{\nu})
\end{equation*}
by the same method as in (4.3) and (4.4). Since $\bigwedge^r \mathcal{M} = L^{-1}$, we have
\begin{equation*}
\beta_{r,\nu}= h^1 (S, \mathcal{O}_S (-C) \otimes L^{\nu-1})=2g+1-d.
\end{equation*}

\smallskip
\noindent (3) The assertion comes directly from (2).
\end{proof}

\smallskip

\begin{lemma}\label{lem:finiteonrnc}
Let $Y \subset {\mathbb P}^n$ be a rational normal curve of degree $n$, and let $\Gamma \subset Y$ be a finite subscheme of length $\ell =\nu n +t$ for some $\nu \geq 2$ and $-n+2 \leq t \leq 1$.
Then for the sheaf of ideals $\mathcal{I}_{\Gamma / {\mathbb P}^n}$ of $\Gamma \subset {\mathbb P}^n$ and $\mathcal{M}_{{\mathbb P}^n} := \Omega_{{\mathbb P}^n} (1)$,
\begin{equation*}
H^1 ({\mathbb P}^n,\bigwedge^{2-t} \mathcal{M}_{{\mathbb P}^n} \otimes \mathcal{I}_{\Gamma/{\mathbb P}^n} (\nu)) \neq 0.
\end{equation*}
\end{lemma}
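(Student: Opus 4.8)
The plan is to push the whole computation onto the rational normal curve $Y\cong{\mathbb P}^1$ that carries $\Gamma$, where every sheaf in sight becomes a direct sum of line bundles. Write $i:=2-t$; the hypotheses $-n+2\le t\le 1$ give $1\le i\le n$, so $\binom{n}{i}\neq 0$. Since $Y$ is nondegenerate and linearly normal, restricting the Euler sequence of $\mathcal{M}_{{\mathbb P}^n}=\Omega_{{\mathbb P}^n}(1)$ to $Y$ identifies $\mathcal{M}_{{\mathbb P}^n}|_Y$ with the syzygy bundle of $\mathcal{O}_Y(1)=\mathcal{O}_{{\mathbb P}^1}(n)$, namely $\mathcal{O}_{{\mathbb P}^1}(-1)^{\oplus n}$; hence $\bigwedge^i\mathcal{M}_{{\mathbb P}^n}|_Y\cong \mathcal{O}_{{\mathbb P}^1}(-i)^{\oplus\binom{n}{i}}$.

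Next I would read off the two layers of $\Gamma\subset Y\subset{\mathbb P}^n$. The inclusion of ideal sheaves gives the short exact sequence $0\to\mathcal{I}_{Y/{\mathbb P}^n}\to\mathcal{I}_{\Gamma/{\mathbb P}^n}\to\mathcal{I}_{\Gamma/Y}\to 0$, and because $\Gamma$ is a length-$\ell$ divisor on $Y\cong{\mathbb P}^1$ we have $\mathcal{I}_{\Gamma/Y}(\nu)=\mathcal{O}_{{\mathbb P}^1}(n\nu-\ell)=\mathcal{O}_{{\mathbb P}^1}(-t)$. Tensoring that sequence by the locally free sheaf $\bigwedge^i\mathcal{M}_{{\mathbb P}^n}(\nu)$ and passing to cohomology, the group I want sits in
\[
H^1\!\big({\mathbb P}^n,\textstyle\bigwedge^i\mathcal{M}_{{\mathbb P}^n}\otimes\mathcal{I}_{\Gamma/{\mathbb P}^n}(\nu)\big)\longrightarrow H^1\!\big(Y,\textstyle\bigwedge^i\mathcal{M}_{{\mathbb P}^n}|_Y\otimes\mathcal{O}_{{\mathbb P}^1}(-t)\big)\longrightarrow H^2\!\big({\mathbb P}^n,\textstyle\bigwedge^i\mathcal{M}_{{\mathbb P}^n}\otimes\mathcal{I}_{Y/{\mathbb P}^n}(\nu)\big).
\]
By the splitting above the middle group equals $H^1({\mathbb P}^1,\mathcal{O}_{{\mathbb P}^1}(-2))^{\oplus\binom{n}{i}}$ (the surviving degree is $-i-t=-2$), which is nonzero; so it suffices to show that the right-hand obstruction group vanishes.

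The crux is thus to prove $H^2({\mathbb P}^n,\bigwedge^i\mathcal{M}_{{\mathbb P}^n}\otimes\mathcal{I}_{Y/{\mathbb P}^n}(\nu))=0$. I would get this from the structure sequence $0\to\mathcal{I}_{Y/{\mathbb P}^n}\to\mathcal{O}_{{\mathbb P}^n}\to\mathcal{O}_Y\to 0$ tensored by $\bigwedge^i\mathcal{M}_{{\mathbb P}^n}(\nu)$: the obstruction group is squeezed between $H^1(Y,\bigwedge^i\mathcal{M}_{{\mathbb P}^n}|_Y\otimes\mathcal{O}_Y(\nu))$ and $H^2({\mathbb P}^n,\bigwedge^i\mathcal{M}_{{\mathbb P}^n}(\nu))$. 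Since $\bigwedge^i\mathcal{M}_{{\mathbb P}^n}=\Omega^i_{{\mathbb P}^n}(i)$ and $\nu\ge 2$, both ${\mathbb P}^n$-cohomologies vanish by Bott's formula, while the $Y$-term is $H^1({\mathbb P}^1,\mathcal{O}_{{\mathbb P}^1}(n\nu-i))^{\oplus\binom{n}{i}}=0$ because $n\nu-i\ge 2n-n=n\ge 0$. Therefore the obstruction vanishes, the first map in the display above is surjective onto a nonzero space, and $H^1({\mathbb P}^n,\bigwedge^{2-t}\mathcal{M}_{{\mathbb P}^n}\otimes\mathcal{I}_{\Gamma/{\mathbb P}^n}(\nu))\neq0$, as claimed.

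The step I expect to require the most care is the bookkeeping of twists that makes everything line up: verifying $\mathcal{I}_{\Gamma/Y}(\nu)=\mathcal{O}_{{\mathbb P}^1}(-t)$ and that the surviving degree on $Y$ is exactly $-2$ (this is precisely what produces the nonzero $H^1$), together with confirming the vanishing of the obstruction uniformly over the whole range $-n+2\le t\le 1$. The two numerical inputs $1\le i\le n$ (so $\binom{n}{i}\neq 0$) and $n\nu-i\ge -1$ are exactly where the hypotheses $\nu\ge 2$ and $-n+2\le t\le 1$ are consumed, so I would keep those inequalities in plain sight throughout.
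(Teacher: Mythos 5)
Your proof is correct and follows essentially the same route as the paper: both hinge on the sequence $0 \rightarrow \mathcal{I}_{Y/{\mathbb P}^n} \rightarrow \mathcal{I}_{\Gamma/{\mathbb P}^n} \rightarrow \mathcal{I}_{\Gamma/Y} \rightarrow 0$, the splitting $\mathcal{M}_{{\mathbb P}^n}|_Y \cong \mathcal{O}_{{\mathbb P}^1}(-1)^{\oplus n}$, and the twist bookkeeping $-i + n\nu - \ell = -2$ that lands on the nonvanishing $H^1({\mathbb P}^1,\mathcal{O}_{{\mathbb P}^1}(-2))$. The only (harmless) divergence is that where the paper invokes the known syzygies of the rational normal curve (property $N_p$ for all $p$) to get an isomorphism onto $H^1({\mathbb P}^1,\mathcal{O}_{{\mathbb P}^1}(-2))^{\oplus\binom{n}{2-t}}$, you verify only the needed $H^2$-vanishing directly from the structure sequence of $Y$ together with Bott's formula, settling for surjectivity --- a slightly more self-contained justification of the same step.
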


\smallskip

\begin{proof}
Let $\mathcal{M}_Y$ be the restriction of $\mathcal{M}_{{\mathbb P}^n}$ to $Y$. Remember that $Y \subset {\mathbb P}^n$ satisfies property $N_p$ for all $p \geq 0$. Thus the exact sequence
\begin{equation*}
0 \rightarrow \mathcal{I}_Y \rightarrow \mathcal{I}_{\Gamma} \rightarrow \mathcal{O}_{{\mathbb P}^1} (-\ell) \rightarrow 0
\end{equation*}
induces the isomorphisms

\smallskip

\begin{enumerate}
\item[(5.13)] \quad $H^1 ({\mathbb P}^n,\bigwedge^i \mathcal{M}_{{\mathbb P}^n} \otimes \mathcal{I}_{\Gamma/{\mathbb P}^n} (j)) \cong H^1 ({\mathbb P}^1,\bigwedge^i \mathcal{M}_Y  \otimes \mathcal{O}_{{\mathbb P}^1} (nj-\ell))$
\end{enumerate}

\smallskip

\noindent for all $i \geq 0$ and $j \geq 2$. Since $\mathcal{M}_Y = \mathcal{O}_{{\mathbb P}^1} (-1) ^{\oplus n}$, we have

\smallskip

\begin{enumerate}
\item[(5.14)] $h^1 ({\mathbb P}^1,\bigwedge^i \mathcal{M}_Y  \otimes \mathcal{O}_{{\mathbb P}^1} (nj-\ell)) = {{n} \choose {i}} h^1 ({\mathbb P}^1,\mathcal{O}_{{\mathbb P}^1} (-i+nj-\ell))$
\end{enumerate}

\smallskip

\noindent In particular, for $i=2-t$ and $j = \nu$ we have

\smallskip

\begin{enumerate}
\item[(5.15)] $h^1 ({\mathbb P}^1,\bigwedge^{2-t} \mathcal{M}_Y  \otimes \mathcal{O}_{{\mathbb P}^1} (n \nu -\ell)) = {{n} \choose {2-t}} h^1 ({\mathbb P}^1,\mathcal{O}_{{\mathbb P}^1} (-2))$ \smallskip
\item[] \quad \quad \quad \quad \quad \quad \quad \quad \quad  \quad \quad \quad \quad \quad $= {{n} \choose {2-t}} >0$.
\end{enumerate}

\smallskip

\noindent The proof is completed by combining (5.13) and (5.15).
\end{proof}

\vskip 1 cm

\section{Minimal Free Resolution III - Geometric Meaning}
\noindent Let $C$ be a hyperelliptic curve of arithmetic genus $g$, and let $\mathcal{L}$ be a very ample line bundle on $C$ of degree $d$ and with the factorization type $(m,b)$. In the previous two sections, we study the minimal free resolution of the linearly normal curve
\begin{equation*}
C \subset {\mathbb P} H^0 (C,\mathcal{L})^* = {\mathbb P}^r , \quad r= d-g,
\end{equation*}
embedded by $|\mathcal{L}|$. This section will be devoted to explain some related geometric meaning of our results. We will deal with the case where $d \geq 2g+1$ and the case where $d \leq 2g$ in turn. \\

\noindent {\bf 6.1. High Degree}

\smallskip

\noindent If $d = 2g+1+p$, $p \geq 0$, then $C \subset {\mathbb P}^r$ satisfies property $N_p$ while it fails to satisfy property $N_{p+1}$. Also the graded Betti numbers are precisely determined by $d$ (Theorem~\ref{thm:main3}). In particular, they are independent on $(m,b)$, and so one cannot determine $(m,b)$ from the minimal free resolution. On the other hand, $(m,b)$ gives us the precise numerical type of the rational ruled surface scroll
\begin{equation*}
S' = \bigcup_{y \in {\mathbb P}^1} <f^{-1} (y)> \subset {\mathbb P}^r
\end{equation*}
where $f : C \rightarrow {\mathbb P}^1$ is the double covering. And in Theorem~\ref{thm:main3}, the graded Betti numbers of $C$ are calculated by using $S'$. Therefore $S'$ is a geometric obstruction of property $N_{p+1}$ of $C \subset {\mathbb P}^r$.

In this subsection, we will show that the failure of property $N_{p+1}$ of $C \subset {\mathbb P}^r$ can be explained by a specific finite subscheme of $C$. Let $\mathcal{L} = A^m \otimes B$ be the hyperelliptic factorization of $\mathcal{L}$. Then the followings are equivalent:
\smallskip
$$(6.1) \quad \begin{cases} 1. \quad H^0 (C,\mathcal{L} \otimes \omega_C ^{-1} ) = H^0 (C,A^{m-g+1} \otimes B) \neq 0 \smallskip \\
2. \quad \mbox{There exists a $(p+3)$-secant $(p+1)$-plane to $C$.} \\
3. \quad m \geq g-1
\end{cases}$$
\smallskip
\noindent Let us recall Theorem 1.1 in \cite{EGHP}, which asserts that if a non-degenerate closed subscheme $X \subset {\mathbb P}^N$ admits a $(p+3)$-secant $(p+1)$-plane, then it fails to satisfy the condition $N_{2,p+1}$. Therefore if $m \geq g-1$, then $C \subset {\mathbb P}^r$ fails to satisfy property $N_{p+1}$ because there is a $(p+3)$-secant $(p+1)$-plane to $C$. More precisely, any effective divisor $D \in |A^{m-g+1} \otimes B|$ spans a $(p+3)$-secant $(p+1)$-plane to $C$. Geometrically, $\langle D \rangle$ is the span of $\Lambda$ and general $(m+1-g)$ rulings of $S'$.

From now on, we focus on the case where $m \leq g-2$. Since $B$ is normalized and $m-g+1 <0$, we have $H^0 (C,\mathcal{L} \otimes \omega_C ^{-1} ) = H^0 (C,A^{m-g+1} \otimes B) = 0$. Thus $C \subset {\mathbb P}^r$ does not admit a $(p+3)$-secant $(p+1)$-plane by (6.1). Note that $S'=S$ is smooth. Let $C_0$ be the minimal section of $S$. The finite scheme $\Gamma := C \cap C_0$ is of length $b=(p+g-m)+(g+1-m)$ and spans the linear space $\Lambda := \langle C_0 \rangle$ of dimension $(p+g-m)$. That is,
\smallskip
\begin{enumerate}
\item[(6.2)] $\Lambda= \langle \Gamma \rangle$ is a $\{(p+g-m)+(g+1-m)\}$-secant $(p+g-m)$-plane to $C$
\end{enumerate}
\smallskip
where $g+1-m \geq 3$, and so $C \subset {\mathbb P}^r$ fails to satisfy property $N_{p+g-m}$ by Theorem 1.1 in \cite{EGHP}. Now we will see that $\Gamma$ is a geometric obstruction of property $N_{p+1}$ of $C$. A crucial fact is that $\Gamma$ is contained in the rational normal curve $C_0$.
\smallskip

\begin{theorem}\label{thm:geometricobstruction}
$\Gamma$ obstructs property $N_{p+1}$ of $C \subset {\mathbb P}^r$.
\end{theorem}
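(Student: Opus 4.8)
The plan is to re-run the argument used for Theorem~\ref{thm:lowdegree}.(2).(iii), now with the critical twist equal to $2$ (the degree in which $N_{p+1}$ lives) and to read the obstruction off the rational normal curve $C_0\supset\Gamma$. Since $C\subset{\mathbb P}^r$ is projectively normal, the failure of $N_{p+1}$ is equivalent to $\beta_{p+1,2}\neq 0$, and by (4.1) we have $\beta_{p+1,2}=h^1({\mathbb P}^r,\bigwedge^{p+1}\mathcal{M}\otimes\mathcal{I}_C(2))$. First I would tensor the exact sequence $0\to\mathcal{I}_C\to\mathcal{I}_\Gamma\to B^{-1}\to 0$ --- valid because $\Gamma=C\cap C_0\in|B|$, so that $\mathcal{I}_{\Gamma/C}=B^{-1}$ --- with $\bigwedge^{p+1}\mathcal{M}(2)$, obtaining the connecting sequence
\begin{equation*}
h^1(\bigwedge^{p+1}\mathcal{M}\otimes\mathcal{I}_C(2))\xrightarrow{\ u\ }h^1(\bigwedge^{p+1}\mathcal{M}\otimes\mathcal{I}_\Gamma(2))\xrightarrow{\ v\ }H^1(C,\bigwedge^{p+1}\mathcal{M}_C\otimes\mathcal{L}^2\otimes B^{-1}).
\end{equation*}
The aim is to show that the right-hand term vanishes while the middle term does not; exactness then forces $v=0$, so $u$ is surjective onto a nonzero group manufactured entirely out of $\Gamma$, which is exactly the sense in which $\Gamma$ obstructs $N_{p+1}$.

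For the nonvanishing of the middle term I would localize on the linear span $\Lambda=\langle C_0\rangle={\mathbb P}^n$, where $n=m+b-g-1=p+g-m$, precisely as in (5.11)--(5.12): since $\Lambda$ is $2$-regular, the sequence $0\to\mathcal{I}_\Lambda\to\mathcal{I}_\Gamma\to\mathcal{I}_{\Gamma/\Lambda}\to 0$ yields $h^1({\mathbb P}^r,\bigwedge^{p+1}\mathcal{M}\otimes\mathcal{I}_\Gamma(2))\cong h^1(\Lambda,\bigwedge^{p+1}\mathcal{M}'\otimes\mathcal{I}_{\Gamma/\Lambda}(2))$, and $\mathcal{M}_\Lambda=\Omega_\Lambda(1)$ splits off $\mathcal{M}'=\mathcal{M}|_\Lambda$, so it is enough to show $h^1(\Lambda,\bigwedge^{p+1}\mathcal{M}_\Lambda\otimes\mathcal{I}_{\Gamma/\Lambda}(2))\neq 0$. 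This is where the crucial geometric fact enters: $\Gamma$ lies on the rational normal curve $C_0$ of degree $n$, and $\mbox{length}(\Gamma)=b=2n+(1-p)$. Hence Lemma~\ref{lem:finiteonrnc} applies with $\nu=2$ and $t=1-p$ (one checks $-n+2\leq 1-p\leq 1$, which holds exactly because $p\geq 0$ and $m\leq g-1$), and since $2-t=p+1$ it delivers $h^1(\Lambda,\bigwedge^{p+1}\mathcal{M}_\Lambda\otimes\mathcal{I}_{\Gamma/\Lambda}(2))\neq 0$.

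The delicate point --- and the step I expect to be the main obstacle --- is the vanishing of the third term $H^1(C,\bigwedge^{p+1}\mathcal{M}_C\otimes\mathcal{L}^2\otimes B^{-1})$. In the low-degree setting this was obtained from the $2$-regularity of $\mathcal{O}_C$ together with $\nu\geq 3$, but here $\mathcal{L}^2\otimes B^{-1}=\mathcal{L}\otimes A^m$, and after stripping off the effective divisor $A^m$ one is left only with the twist $\mathcal{L}$, which sits below the regularity threshold; so the regularity argument genuinely breaks and must be replaced. The fix is to recognize the needed vanishing as the Koszul-cohomology incarnation of property $N_p$ itself: one has $H^1(C,\bigwedge^{p+1}\mathcal{M}_C\otimes\mathcal{L})=\beta_{p,2}$, and $\beta_{p,2}=0$ precisely because $(C,\mathcal{L})$ satisfies $N_p$ by Theorem~\ref{thm:main3}. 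Since $A^m$ is effective ($m\geq 0$), the inclusion $\mathcal{O}_C\hookrightarrow A^m$ gives a surjection $H^1(C,\bigwedge^{p+1}\mathcal{M}_C\otimes\mathcal{L})\twoheadrightarrow H^1(C,\bigwedge^{p+1}\mathcal{M}_C\otimes\mathcal{L}\otimes A^m)$, whose cokernel is supported on a finite scheme and hence carries no $H^1$; thus the third term is a quotient of $\beta_{p,2}=0$ and vanishes.

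Putting the three steps together, $u$ surjects onto the nonzero space $h^1(\Lambda,\bigwedge^{p+1}\mathcal{M}_\Lambda\otimes\mathcal{I}_{\Gamma/\Lambda}(2))$ produced by $\Gamma\subset C_0$, so $\beta_{p+1,2}\neq 0$ and $\Gamma$ is a geometric obstruction to $N_{p+1}$. I would stress that this is the conceptual core of the statement: the failure of $N_{p+1}$ is measured in the twist $2$, and that is exactly the twist in which property $N_p$ kills the intrinsic $C$-contribution (the third term), forcing the entire obstruction to emanate from the finite subscheme $\Gamma$ lying on the rational normal curve $C_0$.
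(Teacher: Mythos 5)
Your proof is correct and follows essentially the same route as the paper: the same sequence $0\to\mathcal{I}_C\to\mathcal{I}_\Gamma\to B^{-1}\to 0$ twisted by $\bigwedge^{p+1}\mathcal{M}(2)$, the vanishing of $H^1(C,\bigwedge^{p+1}\mathcal{M}_C\otimes\mathcal{L}\otimes A^m)$ deduced from property $N_p$ together with the effectivity of $A^m$, and the non-vanishing of the middle term obtained by restricting to $\Lambda=\langle C_0\rangle$ and applying Lemma~\ref{lem:finiteonrnc} with $\nu=2$ and $t=1-p$. Your labeling of the relevant Koszul group as $\beta_{p,2}$ (where the paper's (6.4) writes $\beta_{p,1}$, an apparent typo) is the standard identification, so the two arguments agree in substance.
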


\smallskip

\begin{proof}
Consider the cohomology long exact sequence (5.10) for $\nu =2$:
\smallskip
\begin{enumerate}
\item[(6.3)] \quad  $H^1 ({\mathbb P}^r,\bigwedge^{p+1} \mathcal{M} \otimes \mathcal{I}_C (2)) \rightarrow H^1 ({\mathbb P}^r,\bigwedge^{p+1} \mathcal{M} \otimes \mathcal{I}_{\Gamma} (2))$ \smallskip
\item[] \quad \quad \quad \quad \quad \quad \quad \quad $ \rightarrow H^1 (C,\bigwedge^{p+1} \mathcal{M}_C \otimes \mathcal{L} \otimes A^m)$
\end{enumerate}
\smallskip

\noindent Since $\mathcal{L}$ is non-special and $(C,\mathcal{L})$ satisfies property $N_p$,
\smallskip
\begin{enumerate}
\item[(6.4)] \quad \quad  $\beta_{p,1} = h^1 (C,\bigwedge^{p+1} \mathcal{M}_C \otimes \mathcal{L})=0$.
\end{enumerate}
\smallskip

\noindent Clearly this implies that

\smallskip
\begin{enumerate}
\item[(6.5)] \quad \quad  $H^1 (C,\bigwedge^{p+1} \mathcal{M}_C \otimes \mathcal{L} \otimes A^m) = 0$.
\end{enumerate}
\smallskip

\noindent To prove the non-vanishing of the middle term in (6.3), we apply Lemma~\ref{lem:finiteonrnc} to $\Gamma$.
Since $|\Gamma|=2(p+g-m)+(1-p)$, Lemma~\ref{lem:finiteonrnc} shows that
\begin{equation*}
(6.6) \quad \quad H^1 (\Lambda,\bigwedge^{p+1} \mathcal{M}_{\Lambda} \otimes \mathcal{I}_{\Gamma/\Lambda} (2)) \neq 0 \quad \quad \quad \quad \quad \quad \quad \quad \quad \quad
\end{equation*}
where $\mathcal{M}_{\Lambda} = \Omega_{\Lambda}(1)$ and $\mathcal{I}_{\Gamma/\Lambda}$ is the sheaf of ideals of $\Gamma \subset \Lambda$. By the same argument as in (5.11) and (5.12), one can show that (6.6) implies
\begin{equation*}
(6.7) \quad \quad H^1 ({\mathbb P}^r,\bigwedge^{p+1} \mathcal{M} \otimes \mathcal{I}_{\Gamma} (2)) \neq 0. \quad \quad \quad \quad \quad \quad \quad \quad \quad \quad \quad
\end{equation*}
In conclusion, $\beta_{p+1,1} = h^1 ({\mathbb P}^r,\bigwedge^{p+1} \mathcal{M} \otimes \mathcal{I}_C (2)) \neq 0$ by (6.3), (6.5) and (6.7), and hence $C \subset {\mathbb P}^r$ fails to satisfy property $N_{p+1}$.
\end{proof}

\smallskip

\begin{remark}
Let $C$ be a smooth projective curve of genus $g \geq 1$, and let $\mathcal{L}$ be a line bundle on $C$ of degree $d=2g+1+p$. Theorem 2 in \cite{GL2} says that $(C,\mathcal{L})$ fails to satisfy property $N_{p+1}$ if and only if either $C$ is a hyperelliptic curve or $C \subset {\mathbb P} H^0 (C,\mathcal{L})^*$ admits a $(p+3)$-secant $(p+1)$-plane. By our investigation in this subsection, every hyperelliptic curve $C$ admit a finite subscheme $\Gamma \subset C$ which obstructs property $N_{p+1}$ of $(C,\mathcal{L})$. \qed
\end{remark}

\smallskip

\noindent {\bf 6.2. Low Degree}

\smallskip

\noindent Now we consider the case where $\mathcal{L}$ is a very ample line bundle of degree $d \leq 2g$ and with the factorization type $(m,b)$. Then Theorem~\ref{thm:lowdegree} illustrates how $(m,b)$ effects on the shape of the minimal free resolution of $C \subset {\mathbb P}^r,~r=d-g$. More precisely, the Castelnuovo-Mumford regularity of $C$ is equal to $\nu+1$ where $\nu = \frac{b-1+p}{m+b-g-1}$. Also $C$ satisfies the condition $N_{\nu,p}$ while it fails to satisfy the condition $N_{\nu,p+1}$. Thus one can read off the values of $\nu$ and $p$ from the minimal free resolution of $C$. Then the identities
\begin{equation*}
(6.8) \quad \quad d=2m+b \quad \mbox{and} \quad \nu = \frac{b-1+p}{m+b-g-1} \quad \quad
\end{equation*}
enables us to determine the pair $(m,b)$ as follows:
\begin{equation*}
(6.9) \quad \quad m \quad = \quad d-g-1 - \frac{2g+1+p-d}{\nu-2} \quad \quad \quad
\end{equation*}
\begin{equation*}
(6.10) \quad \quad b \quad = \quad 2g+2-d+\frac{2(2g+1+p-d)}{\nu -2} \quad \quad
\end{equation*}

\noindent Indeed, the factorization type interacts with the minimal free resolution more strongly in the following sense:

\begin{theorem}\label{thm:DistinctBettiTable}
Let $C_1$ and $C_2$ be hyperelliptic curves of the same arithmetic genus $g$. For $i =1,2$, let $\mathcal{L}_i$ be a very ample line bundles on $C_i$ of degree $d \leq 2g$ and with the factorization types $(m_i , b_i )$. Then $C_1 \subset {\mathbb P} H^0 (C_1,\mathcal{L}_1)^*$ and $C_2 \subset {\mathbb P} H^0 (C_2 ,\mathcal{L}_2)^*$ have the same graded Betti numbers if and only if $(m_1 , b_1 )=(m_2 , b_2)$.
\end{theorem}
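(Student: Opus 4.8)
The plan is to prove both implications, the substantive content being the recovery of $(m,b)$ from the Betti table. Throughout I keep the genus $g$ fixed, since by hypothesis $C_1$ and $C_2$ share the same arithmetic genus.

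\emph{The forward implication.} Here I would show that the entire minimal free resolution is a universal function of the triple $(g,m,b)$. The key observation is that the proof of Theorem~\ref{thm:lowdegree} never uses the individual curve $C_i$ or bundle $\mathcal{L}_i$ beyond the data recorded in Theorem~\ref{thm:surface}: each $C_i$ lies on the scroll $S=\mathbb{P}(\mathcal{O}_{\mathbb{P}^1}\oplus\mathcal{O}_{\mathbb{P}^1}(b-g-1))$ as a divisor in the class $2C_0+(2g+2-b)\mathfrak{f}$, with $\mathcal{L}_i=\mathcal{O}_S(C_0+m\mathfrak{f})\otimes\mathcal{O}_{C_i}$. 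Through the sequence (4.2), the bundles $\bigwedge^i\mathcal{M}_S$, the filtration (4.6), and the reduction (4.9), every graded Betti number $\beta_{i,j}$ is expressed as the dimension of a cohomology group of an explicit line bundle (or finite direct sum) on $S$, and ultimately on $\mathbb{P}^1$. Each such dimension depends only on $g$, $m$, $b$. Hence $g_1=g_2=g$ and $(m_1,b_1)=(m_2,b_2)$ force $\beta_{i,j}(C_1)=\beta_{i,j}(C_2)$ for all $i,j$.

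\emph{The backward implication.} Assume the two Betti tables coincide; I would extract three numerical invariants. First, $d$: the largest $i$ with $\beta_{i,j}\neq0$ for some $j$, i.e.\ the length of the resolution of $I_C$, equals $r$, since $\beta_{r,\nu}=2g+1-d\neq0$ by Theorem~\ref{thm:lowdegree}(2)(iv) (recall $d\le 2g$). As $r=d-g$ and $g$ is common, equal tables give $d_1=d_2=:d$. Second, $\nu$: the Castelnuovo--Mumford regularity is determined by the graded Betti numbers and equals $\nu+1$ by Theorem~\ref{thm:lowdegree}(1), so $\nu_1=\nu_2=:\nu$. Third, $p$: by Theorem~\ref{thm:lowdegree}(2)(iii) the smallest $i$ with $\beta_{i,\nu}\neq0$ is $p+1$, whence $p_1=p_2=:p$.

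\emph{Inversion.} It remains to recover $(m,b)$ from $(d,g,\nu,p)$. Clearing denominators in $d=2m+b$ and $\nu=\tfrac{b-1+p}{m+b-g-1}$ yields the linear system
\begin{equation*}
2m+b=d,\qquad \nu m+(\nu-1)b=\nu(g+1)+p-1,
\end{equation*}
whose coefficient determinant is $\nu-2$. Since $d\le 2g$ forces $\nu\ge 3$, this determinant is nonzero, so $(m,b)$ is uniquely determined; this is precisely the content of formulas (6.9) and (6.10). Therefore $(m_1,b_1)=(m_2,b_2)$, completing the proof. I expect the forward implication to be the main obstacle: the statement of Theorem~\ref{thm:lowdegree} only records the Betti numbers in rows $1$ and $\nu$ together with the vanishing in rows $2,\dots,\tau-1$, so care is needed to confirm that the cohomological reduction in its proof genuinely computes \emph{every} entry of the table --- including those in the unlisted rows $\tau\le j\le\nu-1$ --- as a quantity depending only on $(g,m,b)$, rather than invoking merely the partial list appearing in the statement.
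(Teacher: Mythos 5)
Your recovery of $(m,b)$ from the Betti table is correct and coincides with the paper's argument for that implication: the regularity read off the table gives $\nu$, the first nonvanishing entry $\beta_{p+1,\nu}$ in row $\nu$ gives $p$ (Theorem~\ref{thm:lowdegree}.(2).(iii)), and the linear system $2m+b=d$, $\nu m+(\nu-1)b=\nu(g+1)+p-1$ has determinant $\nu-2\neq 0$ because $d\le 2g$ forces $\nu\ge 3$; this is exactly (6.9)--(6.10). (Extracting $d$ from $\beta_{r,\nu}=2g+1-d\neq0$ is harmless but unnecessary, since the hypothesis already gives both bundles the same degree $d$.)

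The forward implication, however, has a genuine gap, and it sits precisely at the point you flagged at the end --- except that the needed ``confirmation'' fails outright rather than merely requiring care. The mechanism you invoke, the chain (4.1)$\rightarrow$(4.5)$\rightarrow$(4.9) expressing every $\beta_{i,j}$ as a cohomology dimension on ${\mathbb P}^1$, breaks down twice in the range $d\le 2g$. First, the identification $\beta_{i,j}=h^1({\mathbb P}^r,\bigwedge^i\mathcal{M}\otimes\mathcal{I}_C(j))$ behind (4.1) requires $j$-normality (in Section 4 it is justified by projective normality and $3$-regularity); by Theorem~\ref{thm:lowdegree}.(1) the curve fails to be $j$-normal for $2\le j\le\nu-1$, so in low degree this identification is available only in row $\nu$ (this is (5.3)) and, via the comparison (5.2) with the scroll, in the linear strand. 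Second, even in row $\nu$ the reduction through (4.6)--(4.8) degenerates only when $M\otimes\mathcal{O}_S(-C)\otimes L^{j}=\mathcal{O}_S((j-3)C_0+\ast\,\mathfrak{f})$ has no cohomology: this holds for $j=2$ (coefficient $-1$ on $C_0$) but fails for $j=\nu\ge3$, where the coefficient is $\nu-3\ge0$ and the $H^0$ and $H^1$ terms in (4.7) can survive. This is exactly why Theorem~\ref{thm:lowdegree} proves only the vanishing $\beta_{p,\nu}=0$ (via (5.5)--(5.6)) and the nonvanishing $\beta_{p+1,\nu}\neq0$ (via the secant scheme $\Gamma$ and Lemma~\ref{lem:finiteonrnc}), rather than closed formulas, and why the rows $\tau\le j\le\nu-1$ are never computed. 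The paper closes the gap with a different, module-theoretic device that your outline lacks: the exact sequence (6.11), $0\rightarrow I_S\rightarrow I_C\rightarrow E\rightarrow 0$ with $E=\bigoplus_{j}H^0(S,\mathcal{O}_S(-C)\otimes L^j)$, combined with the agreement of the linear strands of $I_S$ and $I_C$ (6.13) and the $2$-regularity of the scroll (6.14), yields $\Tor_i^R(I_C,K)_{i+j}\cong\Tor_i^R(E,K)_{i+j}$ for all $i\ge1$ and $j\ge2$ (6.15). Since $\mathcal{O}_S(-C)$ depends only on the divisor class $2C_0+(2g+2-b)\mathfrak{f}$, the $R$-module $E$ --- and hence every entry of the table in rows $j\ge2$ --- is determined by $(g,m,b)$ alone, with no need to evaluate any individual entry. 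Your proof needs this (or an equivalent) argument; as written, the forward direction does not go through.
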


\begin{proof}
$(\Longrightarrow)$ Clear from (6.9) and (6.10).

\smallskip

\noindent $(\Longleftarrow)$ Let $(m,b)=(m_1 , b_1)$ and let $\mathcal{L}$ be a line bundle with the factorization type $(m,b)$. We keep the notations in the proof of Theorem~\ref{thm:lowdegree}. From (4.2), we have the following short exact sequence of $R$-modules:
\begin{equation*}
(6.11) \quad \quad \quad 0 \rightarrow I_S \rightarrow I_C \rightarrow E \rightarrow 0 \quad \quad \quad \quad \quad \quad \quad \quad
\end{equation*}
where $E := \bigoplus_{j \in {\mathbb Z}} H^0 (S,\mathcal{O}_S (-C) \otimes L^j )$. Then (6.11) induces the following long exact sequence:
\begin{equation*}
(6.12) \cdots \rightarrow \mbox{Tor}_i ^R (I_S , K)_{i+j} \rightarrow \mbox{Tor}_i ^R (I_C , K )_{i+j} \rightarrow \mbox{Tor}_i ^R (E , K)_{i+j} \quad \quad \quad \quad \quad \quad
\end{equation*}
\begin{equation*}
 \quad \quad \quad  \rightarrow \mbox{Tor}_{i-1} ^R (I_S , K)_{i+j+1} \rightarrow \mbox{Tor}_{i-1} ^R (I_C , K)_{i+j+1} \rightarrow \mbox{Tor}_{i-1} ^R (E , K)_{i+j+1} \rightarrow \cdots
\end{equation*}
By Theorem~\ref{thm:lowdegree}.(2).(i) and (ii) or the proof of them,
\smallskip
\begin{enumerate}
\item[(6.13)] \quad $\mbox{Tor}_i ^R (I_S , K)_{i+1} \cong \mbox{Tor}_i ^R (I_C , K)_{i+1}$ for all $i \geq 1$.
\end{enumerate}
\smallskip
Also since $S \subset {\mathbb P}^r$ is $2$-regular, we have
\smallskip
\begin{enumerate}
\item[(6.14)] \quad $\mbox{Tor}_i ^R (I_S , K)_{i+j} = 0$ for all $i \geq 1$ and $j \geq 2$.
\end{enumerate}
\smallskip
Now (6.12), (6.13) and (6.14) enable us to show that
\smallskip
\begin{enumerate}
\item[(6.15)] \quad $\mbox{Tor}_i ^R (I_C , K)_{i+j} \cong \mbox{Tor}_i ^R (E, K)_{i+j}$ for all $i \geq 1$ and $j \geq 2$.
\end{enumerate}
\smallskip
Note that for any $\mathcal{L}$ with the factorization type $(m,b)$, we have the same pair ($S$, $L$, $\mathcal{O}_S (-C)$) and the isomorphisms in (6.15). Since the $R$-module $E$ is defined by ($S$, $L$, $\mathcal{O}_S (-C)$), (6.13) and (6.15) completes the proof.
\end{proof}
\smallskip

\begin{remark}\label{rem:distinctBettitable}
For a given $d \in \{ g+3 , g+4 , \cdots , 2g \}$, the possible factorization types $(m,b)$ of a very ample line bundle of degree $d$ are as follows:
\smallskip
\begin{enumerate}
\item[1.] If $d=2k$, then $(m,b) \in \{ (k-i , 2i)~ | ~ g+2-k \leq i \leq \lfloor \frac{g+1}{2} \rfloor$.
In particular, there are exactly $(k - \lceil \frac{g+1}{2}\rceil)$ distinct factorization types. \smallskip
\item[2.] If $d=2k+1$, then $(m,b) \in \{ (k-i , 2i+1)~ | ~ g+1-k \leq i \leq \lfloor \frac{g}{2} \rfloor$.
In particular, there are exactly $(k - \lceil \frac{g}{2}\rceil)$ distinct factorization types.
\end{enumerate}
\smallskip

\noindent Therefore Theorem~\ref{thm:DistinctBettiTable} shows that there exist precisely $(k - \lceil \frac{g+1}{2}\rceil)$ distinct Betti diagrams of linearly normal hyperelliptic curves of arithmetic genus $g$ and of degree $2k$, and precisely $(k - \lceil \frac{g}{2}\rceil)$ distinct Betti diagrams of linearly normal hyperelliptic curves of arithmetic genus $g$ and of degree $2k+1$.  \qed
\end{remark}

\vskip 1 cm

\section{Examples}
\noindent In this section we provide some examples which illustrate the results in the present paper. Let $C$ be a hyperelliptic curve of arithmetic genus $g$ and $\mathcal{L}$ a very ample line bundle on $C$ of degree $d$ and with the factorization type $(m,b)$.

\smallskip
\begin{example}
If $d=g+3$, then $(m,b)=(1,g+1)$ by Theorem \ref{thm:main1}.$(3)$. Therefore $\nu =g$, $p=0$ and $\tau =g$. Theorem \ref{thm:lowdegree} shows that
$$h^1 ({\mathbb P}^3,\mathcal{I}_C (j)) = \begin{cases}
(j-1)(g-j) & \quad \mbox{for $1 \leq j \leq g$, and} \\
         0 & \quad \mbox{for all $j \geq g+1$}.
\end{cases}$$
Also $I_C$ is generated by forms of degree $\leq g+1$ while it fails to be generated by forms of degree $\leq g$. Indeed, $I_C$ is generated by $(I_C)_2$, $(I_C)_g$, and $(I_C)_{g+1}$ since $\tau =g$. \qed
\end{example}
\smallskip
\begin{example}
If $d=g+4$, then $(m,b)=(2,g)$, $\nu =g-1$ and $p=0$. Also
\begin{equation*}
h^1 ({\mathbb P}^4,\mathcal{I}_C (j))=\sum_{k=0} ^{j-2} h^0 ({\mathbb P}^1,\mathcal{O}_{{\mathbb P}^1} (g-2j+k)) \quad \mbox{for all} \quad j \geq 2.
\end{equation*}
The vanishing ideal $I_C$ is generated by forms of degree $\leq g$ while it cannot be generated by forms of degree $\leq g-1$. \qed
\end{example}
\smallskip
\begin{example}
If $d=2g$, then $\nu =3$, $h^1 ({\mathbb P}^g,\mathcal{I}_C (2))= 1$ and $(m,b)=(g-i,2i)$ for some $2 \leq i \leq \frac{g+1}{2}$. In this case, $p=i-2$ and hence property $N_{3,i-2}$ holds but property $N_{3,i-1}$ fails to hold for $C \subset {\mathbb P}^g$. In particular, if $(m,b)=(g-2,4)$ then $I_C$ should have quartic generators while if $(m,b)=(g-i,2i)$ for $3 \leq i \leq \frac{g+1}{2}$ then $I_C$ is generated by quadratic and cubic equations. \qed
\end{example}
\smallskip
\begin{example}\label{ex:table}
To illustrate how the algebraic properties of $C \subset {\mathbb P} H^0 (C,\mathcal{L})^*$ vary depending on $(m,b)$, we consider the case where $g=10$ and $13 \leq d \leq 20$. The previous examples explain the cases $d=13, 14$ and $20$. For the remaining cases, we tabulate the information obtained by Theorem~\ref{thm:lowdegree}. See Table 1 below. To simplify notation, we denote $h^1 ({\mathbb P}^r,\mathcal{I}_C (j))$ by $\gamma_j$. \qed
\end{example}

\begin{table}[hbt]
\begin{center}
\begin{tabular}{|c|c|c|c|c|c|c|c|c|c|c|}\hline
$d$  & $(m,b)$ & $\nu$ & $p$ & $\tau$ & $\gamma_2$ & $\gamma_3$ & $\gamma_4$ & $\gamma_5$ & $\gamma_6$ & $\gamma_7$ \\\hline
$15$ & $(3,9)$ & $8$   & $0$ & $4$ &$6$ & $8$ & $6$ & $4$ & $2$ & $1$ \\\cline{2-11}
     & $(2,11)$ & $5$ & $0$ & $5$& $6$ & $8$ & $7$ & $0$ & $0$ & $0$ \\\hline
$16$ & $(4,8)$ & $7$ & $0$ & $3$& $5$ & $4$ & $3$ & $2$ & $1$ & $0$ \\\cline{2-11}
     & $(3,10)$ & $5$ & $1$ & $3$& $5$ & $5$ & $1$ & $0$ & $0$ & $0$ \\\hline
     & $(5,7)$ & $6$ & $0$ & $2$& $4$ & $3$ & $2$ & $1$ & $0$ & $0$ \\\cline{2-11}
$17$ & $(4,9)$ & $4$ & $0$ & $3$& $4$ & $2$ & $0$ & $0$ & $0$ & $0$ \\\cline{2-11}
     & $(3,11)$ & $4$ & $2$ & $3$ & $4$ & $2$ & $0$ & $0$ & $0$ & $0$ \\\hline
     & $(6,6)$ & $5$ & $0$ & $2$ & $3$ & $2$ & $1$ & $0$ & $0$ & $0$ \\\cline{2-11}
$18$ & $(5,8)$ & $4$ & $1$ & $2$ & $3$ & $1$ & $0$ & $0$ & $0$ & $0$ \\\cline{2-11}
     & $(4,10)$ & $3$ & $0$ & $2$ & $3$ & $0$ & $0$ & $0$ & $0$ & $0$ \\\hline
     & $(7,5)$ & $4$ & $0$ & $2$ & $2$ & $1$ & $0$ & $0$ & $0$ & $0$ \\\cline{2-11}
$19$ & $(6,7)$ & $3$ & $0$ & $2$ & $2$ & $0$ & $0$ & $0$ & $0$ & $0$ \\\cline{2-11}
     & $(5,9)$ & $3$ & $1$ & $2$ & $2$ & $0$ & $0$ & $0$ & $0$ & $0$ \\\cline{2-11}
     & $(4,11)$ & $3$ & $2$ & $2$ & $2$ & $0$ & $0$ & $0$ & $0$ & $0$ \\\hline
\end{tabular}
\end{center}
\caption{Hyperelliptic curves of genus $10$}
\end{table}

\bibliographystyle{plain}

\vskip 2 cm

\author{
 \begin{tabular}{ll}
         Department of Mathematics \\
         Korea University \\
         Seoul 136-701 \\
         Republic of Korea \\
		 {\it email: } euisungpark@korea.ac.kr
 \end{tabular}
}

\end{document}